\documentclass[11pt,reqno]{amsart}

\usepackage{amsmath,amssymb,amsthm,mathtools}
\usepackage{geometry}
\usepackage{enumitem}
\usepackage{booktabs}
\usepackage{xcolor}
\usepackage[colorlinks=true,linkcolor=blue,citecolor=blue,urlcolor=blue]{hyperref}
\usepackage{aliascnt}
\usepackage[capitalise,noabbrev]{cleveref}
\usepackage{fancyhdr}

\hypersetup{
  pdftitle={Improved Weyl bounds on short intervals},
  pdfauthor={Xiyu Hu},
  pdfsubject={Short rational Weyl sums, large values, and Diophantine applications},
  pdfcreator={pdflatex}
}
\setlist{itemsep=0pt,topsep=0.3em,parsep=0pt,partopsep=0pt}
\allowdisplaybreaks
\numberwithin{equation}{section}
\newtheorem{theorem}{Theorem}[section]

\newaliascnt{proposition}{theorem}
\newtheorem{proposition}[proposition]{Proposition}
\aliascntresetthe{proposition}

\newaliascnt{lemma}{theorem}
\newtheorem{lemma}[lemma]{Lemma}
\aliascntresetthe{lemma}

\newaliascnt{corollary}{theorem}
\newtheorem{corollary}[corollary]{Corollary}
\aliascntresetthe{corollary}

\theoremstyle{definition}
\newaliascnt{definition}{theorem}

\aliascntresetthe{definition}

\theoremstyle{remark}
\newaliascnt{remark}{theorem}
\newtheorem{remark}[remark]{Remark}
\aliascntresetthe{remark}

\crefname{theorem}{Theorem}{Theorems}
\Crefname{theorem}{Theorem}{Theorems}
\crefname{proposition}{Proposition}{Propositions}
\Crefname{proposition}{Proposition}{Propositions}
\crefname{lemma}{Lemma}{Lemmas}
\Crefname{lemma}{Lemma}{Lemmas}
\crefname{corollary}{Corollary}{Corollaries}
\Crefname{corollary}{Corollary}{Corollaries}
\crefname{definition}{Definition}{Definitions}
\Crefname{definition}{Definition}{Definitions}
\crefname{remark}{Remark}{Remarks}
\Crefname{remark}{Remark}{Remarks}

\newcommand{\R}{\mathbb R}
\newcommand{\Z}{\mathbb Z}
\newcommand{\T}{\mathbb T}
\newcommand{\e}{\mathrm e}

\newcommand{\cD}{\mathcal D}

\newcommand{\cM}{\mathcal M}

\newcommand{\cR}{\mathcal R}
\newcommand{\cT}{\mathcal T}

\newcommand{\Spec}{\operatorname{Spec}}

\title{Improved Weyl bounds on short intervals}
\author{Xiyu Hu}
\address{School of Mathematical Sciences, University of Chinese Academy of Sciences}
\email{hxyqpr@gmail.com}
\date{}
\subjclass[2020]{11L15, 11J54, 11L07, 11L20}
\keywords{Weyl sums, short intervals, polynomial phases, Vinogradov mean value theorem, small fractional parts, exponential sums over primes}

\begin{document}

\begin{abstract}
For an integer $d\ge3$, put
$\Delta_d=\min\{2^{d-1},d(d-1)\}$.  Let $a/q$ be reduced, let
\[
  P(X)=\frac aqX^d+\alpha_{d-1}X^{d-1}+\cdots+\alpha_0,
\]
and let $\mathcal I$ be an interval of $H\le q$ consecutive integers.  We prove
\[
  \left|\sum_{n\in\mathcal I}e(P(n))\right|
  \ll_{d,\varepsilon}
  q^{1/d}H^\varepsilon+H^{1-1/\Delta_d+\varepsilon}.
\]
Consequently, for every prime $p>d$, every degree-$d$ polynomial
$P\in\mathbb F_p[X]$, and every interval $\mathcal I$ of $H$ consecutive
integers with $p^{1/d}<H<p^{1/(d-1)}$, writing $H^d/p=H^u$, one has
\[
  \left|\sum_{n\in\mathcal I}e_p(P(n))\right|
  \ll_{d,\varepsilon}
  H^{1-\min\{u/d,\,1/\Delta_d\}+\varepsilon}.
\]
At the level of the displayed power-saving exponents, this improves throughout
the full natural short-interval window the canonical uniform pointwise
benchmark obtained by taking the stronger of classical Weyl differencing and
the optimal-VMVT consequence.
\end{abstract}

\maketitle
\tableofcontents

\section{Introduction}

\subsection{Weyl sums and the scope of known bounds}

For $d\ge3$ and $z\in\mathbb R$, write
\[
  e(z)=\exp(2\pi iz),
  \qquad
  \|z\|=\operatorname{dist}(z,\mathbb Z).
\]
Given
\[
  \boldsymbol\alpha=(\alpha_1,\ldots,\alpha_d)\in\mathbb T^d,
  \qquad M\in\mathbb Z,
  \qquad H\in\mathbb N,
\]
the associated Weyl sum over a translated interval is
\begin{equation}\label{eq:intro-general-weyl-sum}
  W_d(\boldsymbol\alpha;M,H)
  =
  \sum_{M<n\le M+H}
  e(\alpha_1n+\cdots+\alpha_dn^d).
\end{equation}
Weyl introduced these sums in his study of uniform distribution modulo one
\cite{Weyl1916}.  They are now basic tools in Diophantine approximation, the
circle method, Waring's problem, estimates for the Riemann zeta-function, and
related problems; see, for example,
\cite{Baker1986,Vaughan1997} and \cite{IwaniecKowalski2004}.

Suppose that $(a,q)=1$ and
\[
  \left|\alpha_d-\frac aq\right|\le q^{-2}.
\]
Classical Weyl differencing gives
\begin{equation}\label{eq:intro-classical-weyl}
  |W_d(\boldsymbol\alpha;M,H)|
  \ll_{d,\varepsilon}
  H^{1+\varepsilon}
  \left(q^{-1}+H^{-1}+qH^{-d}\right)^{2^{1-d}};
\end{equation}
see \cite{Vaughan1997}.  The modern theory culminated in proofs of the main
conjecture in the Vinogradov mean value theorem, through efficient
congruencing and decoupling; see
\cite{Wooley2012,BourgainDemeterGuth2016,Wooley2019}, as well as
\cite{Bourgain2017} for an account of the resulting Weyl estimates.  A
standard consequence, conveniently recorded by Heath-Brown
\cite{HeathBrown2017}, is
\begin{equation}\label{eq:intro-standard-weyl}
  |W_d(\boldsymbol\alpha;M,H)|
  \ll_{d,\varepsilon}
  H^{1+\varepsilon}
  \left(q^{-1}+H^{-1}+qH^{-d}\right)^{1/[d(d-1)]}.
\end{equation}
Both bounds are uniform in the remaining coefficients and in $M$.

It will be convenient to put
\begin{equation}\label{eq:Delta-definition}
  \Delta_d=\min\{2^{d-1},d(d-1)\}.
\end{equation}
In the nontrivial range, taking the stronger of
\eqref{eq:intro-classical-weyl} and \eqref{eq:intro-standard-weyl} gives the
canonical general uniform pointwise benchmark
\begin{equation}\label{eq:intro-generic-weyl}
  |W_d(\boldsymbol\alpha;M,H)|
  \ll_{d,\varepsilon}
  H^{1+\varepsilon}
  \left(q^{-1}+H^{-1}+qH^{-d}\right)^{1/\Delta_d}.
\end{equation}
Among these two bounds, classical Weyl differencing is stronger in degrees
$3,4,5$, while the Vinogradov-mean-value exponent is stronger from degree
six onwards.

The word ``general'' is important here.  There is no single best Weyl
exponent without specifying which coefficients are fixed, which estimates
must be uniform, and what arithmetic or metric hypotheses are imposed.
Refinements of Weyl's inequality and of the passage from mean values to
pointwise estimates were obtained, in particular, by Heath-Brown and by
Robert and Sargos \cite{HeathBrown1988,RobertSargos2000,HeathBrown2017}.
Such estimates can improve the canonical bounds above in specific low-degree
or restricted Diophantine-approximation regimes.  In degree eight, Parsell
obtained a further refinement of an estimate of Robert and Sargos, yielding
improvements over other Weyl-type bounds in certain restricted denominator
ranges \cite{Parsell2014}.

Substantially stronger estimates are also known for special classes of
phases.  Modulo a prime, Kerr obtained estimates for incomplete Gauss sums
\cite{Kerr2018}.  For the sparse cubic phase $an^3/q+\gamma n$ with general
rational modulus $q$, he improved Weyl differencing in a particular short
range \cite{KerrCubic2021}.  For a quartic monomial with a quadratic
irrational coefficient, Heath-Brown obtained an exponent stronger than the
classical quartic Weyl exponent \cite{HeathBrownQuartic2024}.  These results
are not uniform in arbitrary lower-order coefficients and are therefore not
directly comparable with the problem considered here.  A very recent
preprint of Mirzoabdughafurov studies a uniform pointwise estimate for
monomial short Weyl sums in an intermediate rational-approximation range
arising in Waring's problem with almost proportional summands
\cite{Mirzoabdughafurov2026}; its phase class and parameter range are
different from those considered below.

A different and especially relevant body of work relaxes pointwise
uniformity in some coefficients.  Wooley proved strong bounds holding for
almost all choices of a collection of intermediate coefficients, uniformly
in the remaining coefficients \cite{WooleyPerturbations2016}.  Chen and
Shparlinski subsequently combined completion, continuity, and
self-improvement arguments to obtain sharper estimates for Weyl sums with
partially prescribed coefficients \cite{ChenShparlinskiNewBounds2021}.  Of
particular relevance to translated intervals is their theorem that, for
almost every $x_d\in\mathbb T$,
\begin{equation}\label{eq:intro-chen-shparlinski-short}
  \sup_{\boldsymbol y\in\mathbb T^{d-1}}
  \sup_{M\in\mathbb Z}
  \left|
  \sum_{M<n\le M+N}
  e(y_1n+\cdots+y_{d-1}n^{d-1}+x_dn^d)
  \right|
  \le N^{1-1/(d+1)+o(1)}
  \qquad(N\to\infty).
\end{equation}
For $d>3$, the saving $1/(d+1)$ is much larger than the
$1/[d(d-1)]$ saving obtained by applying
\eqref{eq:intro-standard-weyl} directly.  The quantifiers in
\eqref{eq:intro-chen-shparlinski-short}, however, are fundamentally
different from those in this paper: an almost-everywhere assertion does not
furnish a bound for a prescribed rational value $x_d=a/q$.

More generally, Chen, Kerr, Maynard, and Shparlinski showed that the
square-root scale occurs for a full-measure set of coefficient vectors along
infinitely many lengths \cite{ChenKerrMaynardShparlinski2023}.  Chen and
Shparlinski studied mean values and almost-everywhere estimates when the
coefficient vector is restricted to a measure space, including spheres,
moment curves, and line segments
\cite{ChenShparlinskiRestricted2021}.  These results describe typical or
averaged behaviour rather than a pointwise estimate uniform on a prescribed
rational fibre.

One can obtain still better degree dependence by changing the summation set.
For Weyl sums supported on smooth numbers, estimates of Vaughan, developed
further by Wooley, have recently led, through work of Br\"udern and Wooley,
to minor-arc exponents of order $1/(d\log d)$
\cite{Vaughan1989,Wooley1995Smooth,BruedernWooley2025}.  Such estimates have
much stronger degree dependence than \eqref{eq:intro-standard-weyl}, but the
summation is over smooth integers rather than over all integers in a
consecutive interval.

Finally, for a prime $p>d$, write $e_p(z)=e(z/p)$.  Completion followed by
the Weil bound gives, for every degree-$d$ polynomial $P\in\mathbb F_p[X]$
and every interval $\mathcal I\subseteq\mathbb Z$ of length $H\le p$,
\begin{equation}\label{eq:intro-completion-weil}
  \left|\sum_{n\in\mathcal I}e_p(P(n))\right|
  \ll_d p^{1/2}\log p;
\end{equation}
see, for example, \cite{IwaniecKowalski2004}.  This is powerful above the
square-root scale, up to the logarithmic loss, but does not give a nontrivial
saving near $H=p^{1/d}$.  Thus \eqref{eq:intro-generic-weyl} is not an absolute upper
envelope for every class of Weyl sums; it is the natural uniform pointwise
benchmark for arbitrary lower coefficients in the short rational setting
considered below.

\subsection{Short rational Weyl sums}

Let
\[
  P(X)=\frac aqX^d+\alpha_{d-1}X^{d-1}+\cdots+\alpha_0,
  \qquad (a,q)=1,
\]
and let $\mathcal I\subset\mathbb Z$ be an interval of $H$ consecutive
integers.  The main object of this paper is the short rational Weyl sum
\begin{equation}\label{eq:intro-rational-sum}
  S_{\mathcal I}(P)=\sum_{n\in\mathcal I}e(P(n)).
\end{equation}
The scale $H=q^{1/d}$ is a genuine obstruction to uniform cancellation.
Indeed, for the phase $P(n)=n^d/q$, if $H=o(q^{1/d})$, then
$n^d/q=o(1)$ uniformly for $n\le H$, and hence
\[
  \sum_{n\le H}e(n^d/q)=(1+o(1))H.
\]
Thus one cannot expect a uniform power saving below this threshold.  Our
principal short-interval result is the following.

\begin{theorem}\label{thm:rational-short-intro}
Let $d\ge3$ be fixed, let $(a,q)=1$, and let
\[
  P(X)=\frac aqX^d+\alpha_{d-1}X^{d-1}+\cdots+\alpha_0
  \in\mathbb R[X].
\]
If $\mathcal I\subset\mathbb Z$ is an interval of $H$ consecutive integers
with $1\le H\le q$, then, for every $\varepsilon>0$,
\begin{equation}\label{eq:rational-short-intro}
  |S_{\mathcal I}(P)|
  \ll_{d,\varepsilon}
  q^{1/d}H^\varepsilon
  +H^{1-1/\Delta_d+\varepsilon}.
\end{equation}
\end{theorem}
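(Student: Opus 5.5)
The plan is to reduce \cref{thm:rational-short-intro} to the general benchmark \eqref{eq:intro-generic-weyl}, replacing $a/q$ by a better-adapted rational approximation of the leading coefficient, and to handle the resulting major-arc case by a reduction to shorter sums of the same type.

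\emph{Why a direct application is not enough.} Applying \eqref{eq:intro-generic-weyl} with $\alpha_d=a/q$ itself gives, since $H\le q$,
\[
H^{1+\varepsilon}\bigl(H^{-1}+qH^{-d}\bigr)^{1/\Delta_d}.
\]
This is already $\ll H^{1-1/\Delta_d+\varepsilon}$ when $H\ge q^{1/(d-1)}$. When $H\le q^{1/d}$, \eqref{eq:rational-short-intro} holds trivially, because $|S_{\mathcal I}(P)|\le H\le q^{1/d}$. So the whole difficulty lies in the window $q^{1/d}<H<q^{1/(d-1)}$. There the denominator $q$ is too large relative to $H$, and one should use a different approximation.

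\emph{Step 1: choose a new approximation.} By Dirichlet's theorem with parameter $Q=H^{d-1}$, choose $b/q'$ with
\[
1\le q'\le H^{d-1},\qquad \left|\frac aq-\frac b{q'}\right|\le \frac1{q'H^{d-1}}\le\frac1{q'^2}.
\]
Then split into two cases.
\begin{itemize}
\item If $q'\ge H$, apply \eqref{eq:intro-generic-weyl} with the approximation $b/q'$. Then $q'^{-1}\le H^{-1}$ and $q'H^{-d}\le H^{-1}$, so we obtain $H^{1-1/\Delta_d+\varepsilon}$.
\item If $q'<H$, we are in a major arc. Write $a/q=b/q'+\beta$ with $0<|\beta|\le (q'H^{d-1})^{-1}$. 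We have $\beta\ne0$ because $q'<H\le q$, and hence $|\beta|\ge (qq')^{-1}$.
\end{itemize}

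\emph{Step 2: the major arc.} Split $n=q'm+r$ according to the residue $r\bmod q'$. On each class, $e(bn^d/q')$ is constant. The remaining phase in $m$ is a polynomial with leading coefficient $\gamma=\beta q'^d$, and the sum in $m$ has length $L\asymp H/q'$. Here $\gamma$ is rational with denominator dividing $q$, and
\[
|\gamma|\le q'^{d-1}H^{-(d-1)}=L^{-(d-1)}.
\]
I would treat the inner sums in two ways.
\begin{itemize}
\item When $|\gamma|L^d$ is small, meaning $\ll L^{\varepsilon}$, the inner sums admit no better than essentially trivial bounds. In that case one must instead recover $q^{1/d}$ from the lower bound $|\beta|\ge(qq')^{-1}$. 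Concretely, I would show that the set of $n\in\mathcal I$ on which the phase is ``stationary'' has size $\ll q^{1/d}H^{\varepsilon}$, using $|\beta|H^d\ge H^d/(qq')$. The complementary ranges would be handled by first-derivative (Kusmin--Landau) estimates applied after the $m$-sums are regrouped.
\item Otherwise, re-apply \eqref{eq:intro-generic-weyl} to each inner $m$-sum, with a fresh Dirichlet approximation of $\gamma$ at level $L^{d-1}$. This recurses on a strictly shorter length $L<H$. The recursion should therefore be organized as an induction on $H$, with the inductive statement being \eqref{eq:rational-short-intro} itself, uniform in the lower coefficients.
\end{itemize}

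\emph{Main obstacle.} The hard step is the major-arc case. The lower coefficients are arbitrary, so the phase cannot be analysed by stationary phase directly. Moreover, summing the inductive bound over the $q'$ residue classes naively produces $q'\cdot(\text{inner bound})$, which loses a factor of $q'$ against the target $q^{1/d}$. What I would try first is to exploit the fact that, after extracting $b/q'$, the effective denominator of $\gamma$ is $q/\gcd(q,\,aq'-bq)$. Combined with the size constraint $|\beta|\le (q'H^{d-1})^{-1}$, this should force the inner problem into the regime $L\ge (\text{new denominator})^{1/(d-1)}$, where Step 1 applies cleanly and yields $L^{1-1/\Delta_d+\varepsilon}$. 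Summing over $r$ would then give $q'L^{1-1/\Delta_d+\varepsilon}\le H^{1-1/\Delta_d+\varepsilon}$. The residual subcase, where the inner sums cannot be pushed into that regime, is exactly where the $q^{1/d}H^\varepsilon$ term must absorb the loss, and checking that bookkeeping is the point I expect to be most delicate.
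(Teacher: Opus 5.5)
There is a genuine gap, and it is exactly the major-arc case, which is where all the content of the theorem lies. Your reduction to the window $q^{1/d}<H<q^{1/(d-1)}$ and your treatment of the case $q'\ge H$ are fine.

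Test the proposal on $a=1$, i.e.\ $P(n)=n^d/q+\alpha_{d-1}n^{d-1}+\cdots+\alpha_0$. Dirichlet at level $H^{d-1}$ returns $b/q'=0/1$; once $q\ge 2H^{d-1}$ it is the only admissible choice. So $q'=1$, $L=H$ and $\gamma=\beta=1/q$. The residue splitting is then vacuous, and the ``recursion on a strictly shorter length'' hands back the original sum. A fresh Dirichlet approximation of $\gamma$ again gives $0/1$, so the procedure loops.

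The other branch, counting a ``stationary'' set and applying Kusmin--Landau, has no footing, because $\alpha_{d-1},\dots,\alpha_1$ are arbitrary. The derivative of the phase is uncontrolled. Moreover, for $q'>1$ the factors $e(\alpha_jn^j)$ are not constant on residue classes, so no complete-sum saving over $r\bmod q'$ is available either.

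Your closing hope also fails on two counts. First, $0<|\gamma|\le L^{-(d-1)}$ and $\gamma$ is rational with some reduced denominator $q''$, so $q''\ge|\gamma|^{-1}\ge L^{d-1}$, i.e.\ $L\le q''^{1/(d-1)}$. The inner sums therefore sit in the same bad window (or below it), never in the regime where Step~1 applies cleanly. Second, even if they did, $q'L^{1-1/\Delta_d}=q'^{1/\Delta_d}H^{1-1/\Delta_d}$, which exceeds the target rather than being bounded by it.

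The underlying obstruction is that the pointwise benchmark \eqref{eq:intro-generic-weyl} cannot deliver the theorem, however you re-approximate $a/q$. Its optimal transferred form is $H^{1+\varepsilon}(\lambda^{-1}+H^{-1}+\lambda H^{-d})^{1/\Delta_d}$ with $\lambda=r+H^d|r\,a/q-b|$ for reduced $b/r$. For $a=1$ one checks that every choice gives $\lambda^{-1}+\lambda H^{-d}\gg qH^{-d}=H^{-u}$. This yields exactly $H^{1-u/\Delta_d}$, the benchmark the theorem is designed to beat.

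What is missing is structural information about large values, namely the inverse theorem \cref{cor:combined-inverse}, which is what the paper uses. If $A=|S|>H^{1-1/\Delta_d+3\eta}$, there is an integer $r\ll(H/A)^dH^\eta$, hence $r<H\le q$, with $\|r\,a/q\|\ll A^{-d}H^\eta$. Since $(a,q)=1$ and $1\le r<q$, we have $\|ra/q\|\ge1/q$, so $A\ll q^{1/d}H^{\eta/d}$.

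The accuracy $A^{-d}$ on the leading coefficient, at denominator $(H/A)^d$, is not obtainable from any pointwise bound. It rests on simultaneous approximation of all the coefficients followed by Baker's compression lemma (\cref{lem:baker-compression}). The threshold $1/\Delta_d$ comes from Weyl differencing when $d\le5$, and from the fibre VMVT and orbit-collision argument when $d\ge6$.

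Heuristically this is the major-arc bound $|S|\lesssim H\bigl(r(1+H^d|\beta|)\bigr)^{-1/d}\le(r|\beta|)^{-1/d}\le q^{1/d}$. That bound becomes legitimate only once one knows the lower coefficients are also close to rationals with denominator $r$, which is precisely what the inverse theorem supplies.
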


Now let $p>d$ be prime.  Choosing integer lifts of the coefficients gives the
finite-field form.

\begin{corollary}\label{cor:prime-short-intro}
Let $p>d$ be prime, let $P\in\mathbb F_p[X]$ have degree $d\ge3$, and let
$\mathcal I$ be an interval of $H\le p$ consecutive integers.  Then, for
every $\varepsilon>0$,
\begin{equation}\label{eq:prime-short-intro}
  \left|\sum_{n\in\mathcal I}e_p(P(n))\right|
  \ll_{d,\varepsilon}
  p^{1/d}H^\varepsilon
  +H^{1-1/\Delta_d+\varepsilon}.
\end{equation}
\end{corollary}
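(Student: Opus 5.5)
The corollary should follow from \cref{thm:rational-short-intro} by lifting to integers and then using reduced fractions. Write
\[
  P(X)=c_dX^d+c_{d-1}X^{d-1}+\cdots+c_0\in\mathbb F_p[X],
  \qquad c_d\ne0.
\]
Choose integer representatives $\tilde c_j\in\{0,1,\ldots,p-1\}$ of the $c_j$, so that $\tilde c_d\not\equiv0\pmod p$. For every integer $n$ we have $e_p(P(n))=e(\tilde P(n))$, where
\[
  \tilde P(X)=\frac{\tilde c_d}{p}X^d+\frac{\tilde c_{d-1}}{p}X^{d-1}+\cdots+\frac{\tilde c_0}{p}\in\mathbb R[X].
\]
This is well defined because $e(z/p)$ depends only on $z\bmod p$, and $P(n)\bmod p$ is the reduction of $\sum_j\tilde c_jn^j$.

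Set $a=\tilde c_d$ and $q=p$. Since $p$ is prime and $p\nmid a$, we get $(a,q)=1$. The leading coefficient of $\tilde P$ is therefore the reduced fraction $a/q$. The remaining coefficients $\alpha_j=\tilde c_j/p$ are arbitrary reals, which \cref{thm:rational-short-intro} permits.

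The interval $\mathcal I$ has $H\le p=q$ consecutive integers, so the hypothesis $1\le H\le q$ holds; the case $H=0$ is trivial. Applying \cref{thm:rational-short-intro} gives
\[
  \Bigl|\sum_{n\in\mathcal I}e_p(P(n))\Bigr|
  =|S_{\mathcal I}(\tilde P)|
  \ll_{d,\varepsilon}p^{1/d}H^\varepsilon+H^{1-1/\Delta_d+\varepsilon},
\]
which is \eqref{eq:prime-short-intro}. The implied constant depends only on $d$ and $\varepsilon$, as in the theorem.

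There is essentially no obstacle here. The only point needing care is that "degree $d$" in $\mathbb F_p[X]$ guarantees $p\nmid\tilde c_d$. This is exactly what makes the leading coefficient a reduced fraction with denominator $q=p$, and not a fraction with a smaller denominator or an integer. The hypothesis $p>d$ is not used in this deduction; it is inherited from the setting of the abstract and the Weil-bound comparison.
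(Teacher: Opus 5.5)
Your proposal is correct and is essentially the paper's own proof. You lift the coefficients to integer representatives, observe that the leading coefficient becomes the reduced fraction $\tilde c_d/p$ because $p\nmid\tilde c_d$, and apply \cref{thm:rational-short-intro} with $q=p$. You are also right that the hypothesis $p>d$ plays no role in this deduction.
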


Recent work of Koh and Shparlinski \cite{KohShparlinski2026} develops
finite-field analogues of mean-value and restricted-mean-value results for
short rational exponential sums.  Their estimates average over coefficient
families and use Mordell-type arguments in place of the Vinogradov mean value
theorem.  By contrast, \cref{cor:prime-short-intro} is an individual
estimate, uniform in the lower coefficients and in the location of the
interval.

Consider now the natural short-interval window
\begin{equation}\label{eq:natural-window-intro}
  p^{1/d}<H<p^{1/(d-1)}.
\end{equation}
Its lower endpoint reflects the obstruction above, while its upper endpoint
is the transition at which $pH^{-d}$ and $H^{-1}$ are equal.  Define
$u\in(0,1)$ by
\begin{equation}\label{eq:u-definition-intro}
  \frac{H^d}{p}=H^u.
\end{equation}
Throughout this window, $pH^{-d}=H^{-u}$ dominates the other terms in
\eqref{eq:intro-generic-weyl}, so the canonical general bound gives
\begin{equation}\label{eq:intro-generic-u}
  \left|\sum_{n\in\mathcal I}e_p(P(n))\right|
  \ll_{d,\varepsilon}
  H^{1-u/\Delta_d+\varepsilon}.
\end{equation}
On the other hand, \cref{cor:prime-short-intro} gives
\begin{equation}\label{eq:prime-u-intro}
  \left|\sum_{n\in\mathcal I}e_p(P(n))\right|
  \ll_{d,\varepsilon}
  H^{1-\tau_d(u)+\varepsilon},
  \qquad
  \tau_d(u)=\min\left\{\frac ud,\frac1{\Delta_d}\right\}.
\end{equation}
Since $\Delta_d>d$, for every fixed $0<u<1$ one has
\[
  \tau_d(u)>\frac{u}{\Delta_d}.
\]
Thus, at the level of the displayed power-saving exponents, the new estimate
improves the canonical general bound throughout the full window
\eqref{eq:natural-window-intro}.  For $d\ge6$, this is precisely an
improvement over the optimal-VMVT Weyl exponent.  For $d=3,4,5$, the relevant
comparison is instead with classical Weyl differencing.  The conclusion is
uniform in all lower-order coefficients, so it is distinct from the metric
estimate \eqref{eq:intro-chen-shparlinski-short}, as well as from bounds for
monomial, sparse, or smooth-number-supported sums.

For example, when $d=6$ and $H=\lfloor p^{1/6+\delta}\rfloor$, one has
$\Delta_6=30$ and
\begin{equation}\label{eq:degree-six-intro}
  \left|\sum_{n\in\mathcal I}e_p(P(n))\right|
  \ll_{\varepsilon}
  \begin{cases}
    p^{1/6}H^\varepsilon,&0<\delta\le1/174,\\[1mm]
    H^{29/30+\varepsilon},&1/174\le\delta<1/30.
  \end{cases}
\end{equation}

\subsection{The inverse theorem behind the short-interval estimate}

For $k\ge2$, $\boldsymbol\alpha=(\alpha_1,\ldots,\alpha_k)\in\mathbb R^k$
(viewed modulo $\mathbb Z^k$), and $N\ge1$, put
\begin{equation}\label{eq:intro-weyl-sum}
  g_k(\boldsymbol\alpha;N)
  =\sum_{n=1}^{N}e(\alpha_1n+\cdots+\alpha_kn^k).
\end{equation}
A second strand of the theory concerns the arithmetic structure forced by an
unusually large Weyl sum.  Baker's inverse results show that a large value of
\eqref{eq:intro-weyl-sum} forces simultaneous rational approximation of all
the coefficients with a controlled common denominator; see
\cite{Baker1982,Baker1992,Baker1986,Baker2016}.  In particular, Baker
\cite[Theorem~4]{Baker2016} obtained such a denominator for a general
degree-$k$ polynomial under the threshold
\[
  |g_k(\boldsymbol\alpha;N)|
  >N^{1-1/[2k(k-1)]+\varepsilon}.
\]
When all intermediate coefficients vanish, he obtained the stronger
threshold $1/[k(k-1)]$.  The main analytic input of the present paper removes
this sparsity restriction.

Put
\begin{equation}\label{eq:Kk-definition}
  K_k=k(k-1).
\end{equation}

\begin{theorem}\label{thm:inverse-main}
Let $k\ge3$ be fixed and let $\varepsilon>0$.  Suppose that
$\boldsymbol\alpha\in\mathbb R^k$, $N$ is sufficiently large, and
\begin{equation}\label{eq:inverse-hypothesis}
  |g_k(\boldsymbol\alpha;N)|\ge A
  >N^{1-1/K_k+\varepsilon}.
\end{equation}
Then there are integers $q,a_1,\ldots,a_k$ such that
\begin{equation}\label{eq:inverse-q}
  1\le q\ll_{k,\varepsilon}
  N^\varepsilon(NA^{-1})^k
\end{equation}
and
\begin{equation}\label{eq:inverse-coefficients}
  |q\alpha_j-a_j|
  \ll_{k,\varepsilon}
  N^{-j+\varepsilon}(NA^{-1})^k
  \qquad(1\le j\le k).
\end{equation}
\end{theorem}

Combining \cref{thm:inverse-main} with the classical inverse theorem obtained
by Weyl differencing gives the following convenient form.

\begin{corollary}\label{cor:combined-inverse}
Let $k\ge3$ be fixed and let $\varepsilon>0$.  Suppose that
$\boldsymbol\alpha\in\mathbb R^k$, $N$ is sufficiently large, and
\begin{equation}\label{eq:combined-threshold}
  |g_k(\boldsymbol\alpha;N)|\ge A
  >N^{1-1/\Delta_k+\varepsilon}.
\end{equation}
Then the conclusions
\eqref{eq:inverse-q}--\eqref{eq:inverse-coefficients} hold.
\end{corollary}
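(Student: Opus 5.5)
The plan is to split according to which of $2^{k-1}$ and $K_k=k(k-1)$ is smaller. Since $\Delta_k=\min\{2^{k-1},K_k\}$, for $k\ge 6$ we have $\Delta_k=K_k$, and the hypothesis \eqref{eq:combined-threshold} coincides with \eqref{eq:inverse-hypothesis}. In that case the corollary is exactly \cref{thm:inverse-main}. The remaining cases are $k=3,4,5$, where $\Delta_k=2^{k-1}<K_k$; the assumed threshold $N^{1-1/2^{k-1}+\varepsilon}$ is then weaker than the one in \cref{thm:inverse-main}. For these cases I would invoke the classical inverse theorem coming from Weyl differencing, in the form given by Baker \cite{Baker1986} (Theorem~5.2 there, or its analogue). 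That result states: if $|g_k(\boldsymbol\alpha;N)|\ge A>N^{1-1/2^{k-1}+\varepsilon}$, then there is $q\ll N^{\varepsilon}(NA^{-1})^{k}$ with $|q\alpha_j-a_j|\ll N^{-j+\varepsilon}(NA^{-1})^{k}$ for all $j$. This is precisely \eqref{eq:inverse-q}--\eqref{eq:inverse-coefficients}.

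If that classical statement is not available in exactly this normalisation, I would derive it in three steps.

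First, apply Weyl differencing $k-1$ times. This gives
\[
A^{2^{k-1}}\ll N^{2^{k-1}-k+\varepsilon}\sum_{h}\min\bigl(N,\|k!\,h_1\cdots h_{k-1}\alpha_k\|^{-1}\bigr).
\]
It follows that $\|k!\,h\alpha_k\|\le N^{-k+1}\cdot(\text{small})$ for $\gg N^{k-1-\varepsilon}(A/N)^{2^{k-1}}$ values of $h\le k!N^{k-1}$, once the divisor bound is applied.

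Second, apply the standard lemma on many small values of $\|h\alpha\|$ (Baker's Lemma~4.5-type). This produces $q_k$ and $a_k$ with $q_k\ll N^\varepsilon(N/A)^{2^{k-1}}$, and then refines to the quality $(N/A)^k$ by partial summation. Concretely, one writes $\alpha_k=a/q+\beta$ and uses the bound $|g|\ll q^{-1/k}\cdots$ for complete sums over residue classes, together with $N(1+N^k|\beta|)^{-1/k}$. This forces $q\ll (N/A)^k N^\varepsilon$ and $|\beta|\ll q^{-1}N^{-k}(N/A)^k$.

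Third, descend: treat $g$ as a sum over residue classes mod $q$ of a degree-$(k-1)$ phase, and iterate down to the linear coefficient. This step is exactly Baker's descent.

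The main obstacle is the second step for $k=3,4,5$: upgrading the denominator exponent from $2^{k-1}$ to $k$. I would handle it through the major-arc approximation $g\approx q^{-1}S(q,\mathbf a)\int_0^N e(\beta_1 t+\dots+\beta_kt^k)\,dt$ together with the bounds $S(q,\mathbf a)\ll q^{1-1/k+\varepsilon}$ and $\int\ll N(1+N^k|\beta_k|+\dots)^{-1/k}$, as in the proof of \cref{thm:inverse-main}. The constants in the two cases are then combined; both depend only on $k$ and $\varepsilon$.
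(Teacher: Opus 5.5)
Your proposal is correct and is essentially the paper's proof. The paper cites the classical Weyl-differencing inverse theorem (Baker, or Baker--Chen--Shparlinski, Lemma~2.6) for the threshold $N^{1-2^{1-k}+\varepsilon}$ and \cref{thm:inverse-main} for $N^{1-1/K_k+\varepsilon}$, then takes whichever threshold is weaker; this is exactly your split into $k=3,4,5$ (where $\Delta_k=2^{k-1}$) and $k\ge6$ (where $\Delta_k=k(k-1)$), and your fallback derivation is unnecessary because the classical result is already available in the normalisation \eqref{eq:inverse-q}--\eqref{eq:inverse-coefficients}.
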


Baker--Chen--Shparlinski \cite[Lemma~2.6]{BakerChenShparlinski2024} record
the previous large-value parameter as
\[
  \min\{2^{k-1},2k(k-1)\}.
\]
The first strict improvement occurs at $k=6$:
\[
\begin{array}{c|ccccc}
  k&4&5&6&7&8\\ \hline
  \text{previous denominator}&8&16&32&64&112\\
  \Delta_k&8&16&30&42&56.
\end{array}
\]
For large $k$, the denominator governing the large-value threshold is
reduced from $2k(k-1)$ to $k(k-1)$.

\subsection{Diophantine and finite-field applications}

The inverse theorem also improves two classical Diophantine applications.
The first is the least fractional part of a general polynomial.

\begin{theorem}\label{thm:small-fractional-parts-intro}
Let $k\ge3$, let $N\ge1$, and let
$\alpha_1,\ldots,\alpha_k\in\mathbb R$.  For every $\varepsilon>0$,
\begin{equation}\label{eq:small-fractional-parts-intro}
  \min_{1\le n\le N}
  \left\|\alpha_kn^k+\cdots+\alpha_1n\right\|
  \ll_{k,\varepsilon}N^{-1/\Delta_k+\varepsilon}.
\end{equation}
In particular, for $k\ge6$ the exponent is $1/[k(k-1)]$.
\end{theorem}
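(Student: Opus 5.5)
We argue by contradiction, combining \cref{cor:combined-inverse} with a Fourier-analytic (Erd\H{o}s--Tur\'an type) detector of small fractional parts, as in the classical Danicic/Baker approach. Put $\delta=N^{-1/\Delta_k+\varepsilon}$ and $f(n)=\alpha_kn^k+\cdots+\alpha_1n$. Suppose $\|f(n)\|\ge\delta$ for all $1\le n\le N$. Take a nonnegative trigonometric polynomial minorant (e.g.\ a Fej\'er-kernel or Vaaler-type construction) $\psi(x)=\sum_{|h|\le L}c_he(hx)$ of the indicator of $[-\delta,\delta]$ modulo $1$, with $L\asymp\delta^{-1}$, $c_0\gg\delta$ and $|c_h|\ll\delta$. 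Since $\psi(f(n))\le 0$ on our assumed set (or using a majorant/minorant pair in the usual way), summing over $n\le N$ gives
\[
  \delta N\ll\sum_{1\le|h|\le L}\delta\,|g_k(h\boldsymbol\alpha;N)|,
\]
so some $1\le h\le L$ satisfies $|g_k(h\boldsymbol\alpha;N)|\gg N/L\gg N\delta = N^{1-1/\Delta_k+\varepsilon}$ (up to a log factor absorbed by shrinking $\varepsilon$). Call this value $A$.

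Next, we apply \cref{cor:combined-inverse} to the coefficient vector $h\boldsymbol\alpha$ with this $A$. Here $NA^{-1}\ll\delta^{-1}$. This yields $q\ll N^{\varepsilon}\delta^{-k}$ and integers $a_j$ with $|qh\alpha_j-a_j|\ll N^{-j+\varepsilon}\delta^{-k}$. Set $Q=qh\ll N^{\varepsilon}\delta^{-k-1}$. Then for $m\le M$ we have
\[
  \|f(Qm)\|\le\sum_j (Qm)^{j}Q^{-1}\cdot\frac{|Q\alpha_j-a_j|}{1}\cdot Q^{\,0}\ \text{(schematically)},
\]
and more precisely $f(Qm)\equiv\sum_j Q^{j-1}m^j(Q\alpha_j-a_j)\pmod 1$. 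Taking $m=1$, i.e.\ $n=Q$, gives
\[
  \|f(Q)\|\ll\sum_{j}Q^{j-1}N^{-j+\varepsilon}\delta^{-k}.
\]
This is small provided $Q\le N$, with the dominant term $j=k$ giving roughly $(Q/N)^{k}Q^{-1}N^{\varepsilon}\delta^{-k}$. We must check that $Q\le N$, which holds since $\delta^{-k-1}=N^{(k+1)/\Delta_k-\cdots}\le N^{1-\eta}$ for $k\ge3$, as $\Delta_k\ge k+1$ with room to spare. We then need $\|f(Q)\|<\delta$, contradicting the assumption.

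The main obstacle is this bookkeeping. The crude bound above may fail to be below $\delta$, since the term $j=1$ contributes $N^{-1+\varepsilon}\delta^{-k}$, which is fine, but the term $j=k$ contributes $Q^{k-1}N^{-k+\varepsilon}\delta^{-k}$. If that is too big, the remedy I would try first is to iterate: pass to the subprogression $n=Qm$ and the polynomial $f(Qm)$, whose coefficients are $Q^{j-1}(Q\alpha_j-a_j)$ and hence small. A polynomial with all coefficients $\beta_j$ satisfying $|\beta_j|\ll X^{-j}\cdot(\text{small})$ takes a small value at $m=1$ directly, or one uses a Dirichlet-box/pigeonhole choice of $m\le N/Q$ to make $\|f(Qm)\|<\delta$. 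Alternatively, one can exploit the freedom in $\varepsilon$: choose $\delta=N^{-1/\Delta_k+\varepsilon}$ and verify the exponents, noting that $Q^{k-1}N^{-k}\delta^{-k}\le N^{-1}\delta^{-k^2}\cdot N^{O(\varepsilon)}$, which is at most $\delta$ since $\delta^{-(k^2+1)}\le N^{(k^2+1)/\Delta_k}$. This needs $\Delta_k\ge k^2+1$, which is false. So the refined route via the coefficient sizes $N^{-j}$ relative to $m$ ranging up to $N/Q$ is what must be carried out carefully, and I expect that to be the crux.
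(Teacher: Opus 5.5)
Your route is the paper's. The Fourier detector step is Baker's large-multiple lemma (\cref{lem:large-multiple}), which the paper simply quotes. As you wrote it, though, the step is not valid: a \emph{nonnegative} trigonometric minorant of the indicator of $[-\delta,\delta]$ with $c_0\gg\delta$ cannot exist, since it would vanish on an interval and hence identically. You need a signed Selberg--Vaaler minorant, or just the lemma. After that, both arguments apply \cref{cor:combined-inverse} to the multiplied coefficient vector and evaluate at $n=qh$.

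The gap is at the end. You conclude that the direct estimate at $n=Q$ ``needs $\Delta_k\ge k^2+1$'' and defer to an unspecified iteration or pigeonhole argument as ``the crux.'' No refinement is needed. The direct estimate works, and your apparent obstruction comes from a wasteful bound plus a misidentified dominant term, so as submitted the proof is unfinished.

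Once you have $Q\le N$, which you correctly derive from $\Delta_k\ge k+1$, every term satisfies $Q^{j-1}N^{-j}=N^{-1}(Q/N)^{j-1}\le N^{-1}$. So the $j=k$ term is the \emph{smallest}, not the dominant one, and
\[
  \|f(Q)\|\le\sum_{j=1}^{k}Q^{j-1}|Q\alpha_j-a_j|
  \ll N^{-1+\eta}\delta^{-k}.
\]
Here $\eta<\varepsilon$ is the auxiliary loss used in \cref{cor:combined-inverse}; it must be smaller than $\varepsilon$ so that $N\delta$ clears that corollary's threshold. The right side is below $\delta$ as soon as $N^{\eta}\delta^{-(k+1)}<N$, that is,
\[
  \frac{k+1}{\Delta_k}-(k+1)\varepsilon+\eta<1.
\]
This holds because $\Delta_k\ge k+1$ and $\eta<(k+1)\varepsilon$. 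For $k=3$ one has $\Delta_3=4=k+1$, so the margin comes only from $\varepsilon$. This contradicts $\|f(Q)\|\ge\delta$.

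Your inequality $Q^{k-1}N^{-k}\delta^{-k}\le N^{-1}\delta^{-k^2}$ is true but loses a factor $\delta^{-k(k-1)}$ compared with the bound $N^{-1}\delta^{-k}$ that $Q\le N$ gives immediately. The paper's proof is exactly this computation, with $M=\lfloor N^{1/\Delta_k-\varepsilon/2}\rfloor$ playing the role of $\delta^{-1}$. It takes $n=qm\ll M^{k+1}N^{\eta}\le N$ and bounds $\|\alpha_jn^j\|\le n^{j-1}\|qm\alpha_j\|\ll M^{(k+1)(j-1)+k}N^{-j+j\eta}\ll N^{-1/\Delta_k-\varepsilon/4}$.
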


For $k\ge6$, this extends to arbitrary intermediate coefficients the
exponent that Baker obtained for binomial phases.  Related work of Yeon
\cite{Yeon2024} develops mean-value estimates and small-fractional-part
results for sparse collections of degrees and for sums of several
polynomials.  Those results concern a different structural regime from the
single general polynomial in \cref{thm:small-fractional-parts-intro}.

The second comes from Baker's Harman-sieve argument for polynomial values at
prime arguments \cite{BakerPrimes2017}.

\begin{theorem}\label{thm:prime-fractional-parts-intro}
Let $k\ge4$, and let $f\in\mathbb R[X]$ have degree $k$ and irrational
leading coefficient.  For every
\begin{equation}\label{eq:prime-nu-range}
  0<\nu<\frac{0.4079}{\Delta_k},
\end{equation}
there are infinitely many primes $\ell$ such that
\begin{equation}\label{eq:prime-fractional-parts}
  \|f(\ell)\|<\ell^{-\nu}.
\end{equation}
\end{theorem}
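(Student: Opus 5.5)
The plan is to rerun Baker's Harman-sieve argument for polynomial values at prime arguments \cite{BakerPrimes2017} essentially verbatim. The one change is at the single point where that argument calls on a large-value (inverse) theorem for Weyl sums of degree $k$: there we will use \cref{cor:combined-inverse}. In Baker's paper the admissible range of $\nu$ comes out as (a numerical constant from the sieve) divided by the large-value parameter, recorded in \cite[Lemma~2.6]{BakerChenShparlinski2024} as $\min\{2^{k-1},2k(k-1)\}$. If we substitute $\Delta_k$ for that parameter throughout, we expect the range $\nu<0.4079/\Delta_k$.

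\textbf{Step 1 (setup).} We fix an irrational leading coefficient $\alpha_k$ and a convergent denominator $q$ of $\alpha_k$. We choose $N$ as a suitable power of $q$ (Baker's choice, re-tuned to $\Delta_k$), with $\|q\alpha_k\|<q^{-1}$. Since there are infinitely many such $q$, it suffices to show that for each such $N$ some prime $\ell\in(N,2N]$ has $\|f(\ell)\|<N^{-\nu}$.

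\textbf{Step 2 (detection and sieve).} We detect $\|f(\ell)\|<N^{-\nu}$ by a smooth Vaaler/Erd\H{o}s--Tur\'an type majorant and minorant. This leaves exponential sums $\sum_{n}\rho(n)\,e(hf(n))$ with $1\le h\le L=N^{\nu+\eta}$, where $\rho$ runs over the sieve weights coming from Harman's comparison method. As in Baker, these split into type~I sums $\sum_{m\le M}a_m\sum_{n}e(hf(mn))$ and type~II sums $\sum_m\sum_n a_mb_n e(hf(mn))$ with $m$ in a prescribed middle range. Each such sum has to be shown to be $\ll N^{1-\nu-\eta}$.

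\textbf{Step 3 (reduction to large values).} For a type~I sum we look at the inner sum over $n$. It is a Weyl sum of degree $k$ in $n$, with leading coefficient $h m^k\alpha_k$. For a type~II sum we first apply Cauchy--Schwarz and then Baker's treatment, which again produces degree-$k$ Weyl sums with leading coefficient of the shape $h(m_1^k-m_2^k)\alpha_k$ or $h m^k\alpha_k$. In either case, if the bilinear sum is large, then for many triples $(h,m,\dots)$ the inner Weyl sum of length $N_1$ exceeds $A$ with $A>N_1^{1-1/\Delta_k+\varepsilon}$. At this point we invoke \cref{cor:combined-inverse}. It gives a denominator $r\ll N_1^{\varepsilon}(N_1A^{-1})^k$ with $\|r\,h m^k\alpha_k\|\ll N_1^{-k+\varepsilon}(N_1A^{-1})^k$. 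These are exactly the same shapes of conclusion that Baker uses, so the remaining steps carry over unchanged. By pigeonholing over the many $m$ and $h$ and combining the approximations, we obtain an approximation $|\alpha_k-b/s|$ with $s$ much smaller than $q$, or much larger than $q$, neither of which fits the convergent $q$. This contradiction bounds every sieve sum. The sizes $M$, $L$ and the type~II ranges are all dictated by the threshold $1/\Delta_k$, so the resulting admissible regions scale exactly with $1/\Delta_k$. Baker's numerical optimisation of the Harman sieve then goes through with the same constant $0.4079$.

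\textbf{Main obstacle.} The delicate point is checking that Baker's argument uses the inverse theorem only through the threshold exponent and the bounds \eqref{eq:inverse-q}--\eqref{eq:inverse-coefficients}. In particular, it must not rely on the sparse (binomial) structure, or on the weaker $2k(k-1)$ form, anywhere else, for example in the type~II differencing or in the treatment of lower-order coefficients. I would first go through each lemma in \cite{BakerPrimes2017} that cites the large-value result. I would confirm that the lower coefficients $\alpha_1,\dots,\alpha_{k-1}$ enter only through the uniformity of the inverse theorem, which \cref{cor:combined-inverse} provides for arbitrary coefficients. I would then recheck that the sieve region computation is homogeneous in $1/\Delta_k$. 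The restriction $k\ge4$ presumably comes from the same place in Baker's numerics.
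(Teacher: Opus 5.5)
Your proposal follows the paper's route. The paper likewise observes that Baker's parameter $J(f)$ enters only through \cite[Lemma~5]{BakerPrimes2017}, which in the nonclassical case is applied to a single selected large Weyl sum whose conclusion is exactly that of the inverse theorem, and translating the interval or multiplying by the selected integer does not affect the uniformity in the lower coefficients. One correction: the sieve constraints are not purely homogeneous in $1/\Delta_k$---besides the invariant product $J\rho=0.4079$ one needs $J\ge k+1$ and side conditions such as $\rho(5/2-3/(2k))<1/(2k)$ and $\rho<3k/(20k+5)$, which must be rechecked because lowering $J$ raises $\rho$, though they hold easily for $\rho=0.4079/[k(k-1)]$ when $k\ge6$.
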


For $k=4,5$ this reproduces Baker's bound.  It improves his
general-polynomial denominator from $32$ to $30$ in degree six, from $64$ to
$42$ in degree seven, and from $2k(k-1)$ to $k(k-1)$ for every $k\ge8$.

On the finite-field side, \cref{cor:prime-short-intro} yields quantitative
equidistribution, a bound for the longest cyclic interval omitted by the short
polynomial image, and an additive-basis criterion for sums of these values.
These consequences are stated and proved in
\cref{sec:distribution-applications}.

\subsection{The mechanism and organization}

The proof of \cref{thm:inverse-main} uses all cuts of one large full-length
sum.  At each cut either the prefix or the tail remains large.  After
reversing prefixes, this gives linearly many large partial sums whose leading
coefficient is fixed and whose lower coefficient vectors lie on one integer
translation orbit.  The critical Vinogradov mean value theorem in degree
$k-1$ gives a maximal $K_k$th-moment estimate on this fixed-leading-coefficient
fibre.  An anisotropic band-limited sampling inequality then forces two orbit
points to collide at the canonical coefficient scales $N^{-j}$.  The
translation action is triangular, so the difference of the two translation
parameters supplies a simultaneous preliminary denominator.  Baker's
denominator-compression lemma reduces it to the stated scale $(NA^{-1})^k$.

Sections~2--5 prove the inverse theorem.  Section~6 proves the short rational
and finite-field Weyl estimates.  Section~7 develops discrepancy,
omitted-interval, and additive-basis consequences.  Section~8 treats small
fractional parts over the integers and over the primes.  Section~9 records
briefly where the improved threshold enters related large-value arguments of
Baker, Chen, Shparlinski, and Brandes.

Throughout, the degree is fixed.  Implied constants may depend on the degree
and on displayed small parameters, but on no other quantities.

\section{Preliminaries}

\subsection{Translation of polynomial coefficients}

For $\boldsymbol\alpha=(\alpha_1,\ldots,\alpha_k)\in\T^k$, write
\[
  P_{\boldsymbol\alpha}(X)=\sum_{j=1}^{k}\alpha_jX^j.
\]
For an integer $m$, define the translation map $\cT_m:\T^k\to\T^k$ by
\begin{equation}\label{eq:translation-action}
  (\cT_m\boldsymbol\alpha)_j
  =\sum_{\ell=j}^{k}
  \binom{\ell}{j}\alpha_\ell m^{\ell-j}
  \pmod1
  \qquad(1\le j\le k).
\end{equation}
Then
\begin{equation}\label{eq:translated-polynomial}
  P_{\boldsymbol\alpha}(m+X)
  =P_{\cT_m\boldsymbol\alpha}(X)
   +P_{\boldsymbol\alpha}(m)
  \pmod1.
\end{equation}
The maps form an integer unipotent action:
\begin{equation}\label{eq:translation-group-law}
  \cT_m\cT_n=\cT_{m+n},
  \qquad
  \cT_m^{-1}=\cT_{-m}.
\end{equation}
Let $\pi:\T^k\to\T^{k-1}$ denote projection onto the first $k-1$ coordinates, and let
\begin{equation}\label{eq:reflection}
  \cR(\alpha_1,\ldots,\alpha_k)
  =((-1)^1\alpha_1,\ldots,(-1)^k\alpha_k).
\end{equation}
Thus the coefficients of $P_{\boldsymbol\alpha}(m-X)$ are $\cR\cT_m\boldsymbol\alpha$, apart from the constant term.

\subsection{The Vinogradov mean value theorem}

For integers $s,r\ge1$, let $J_{s,r}(N)$ be the number of solutions of
\[
  x_1^j+\cdots+x_s^j
  =y_1^j+\cdots+y_s^j
  \qquad(1\le j\le r)
\]
with $1\le x_i,y_i\le N$.  Equivalently,
\[
  J_{s,r}(N)
  =\int_{\T^r}
  \left|\sum_{n=1}^{N}
  \e(\beta_1n+\cdots+\beta_rn^r)
  \right|^{2s}
  d\boldsymbol\beta.
\]
The main conjecture in the Vinogradov mean value theorem was proved independently by Bourgain--Demeter--Guth and Wooley.  It states that
\begin{equation}\label{eq:vmvt}
  J_{s,r}(N)
  \ll_{r,\varepsilon}
  N^{s+\varepsilon}
  +N^{2s-r(r+1)/2+\varepsilon}.
\end{equation}
See \cite{BourgainDemeterGuth2016,Wooley2019}.  We use the critical case
\begin{equation}\label{eq:critical-vmvt}
  s=\frac{r(r+1)}2,
  \qquad
  J_{s,r}(N)\ll_{r,\varepsilon}N^{s+\varepsilon}.
\end{equation}

\subsection{Baker's denominator-compression lemma}

We quote the following lemma in the form used in Baker's proof of \cite[Theorem~4]{Baker2016}.  It is a restatement of \cite[Lemma~4.6]{Baker1986}.

\begin{lemma}[Baker]\label{lem:baker-compression}
Let $k\ge3$, and suppose that there are integers $r,v_2,\ldots,v_k$ satisfying
\[
  \gcd(r,v_2,\ldots,v_k)=1
\]
and
\begin{equation}\label{eq:baker-pre-approx}
  |r\alpha_j-v_j|
  \le \frac{N^{1-j}}{4k^4}
  \qquad(2\le j\le k).
\end{equation}
If, for some $\eta>0$,
\begin{equation}\label{eq:baker-amplitude}
  |g_k(\boldsymbol\alpha;N)|\ge A
  >r^{1-1/k}N^\eta,
\end{equation}
then there is an integer $t$ with $1\le t\le2k^2$ such that
\begin{align}
  tr&\le (NA^{-1})^kN^\eta,\label{eq:baker-compressed-q}\\
  t|r\alpha_j-v_j|
  &\le (NA^{-1})^kN^{-j+\eta}
  \qquad(2\le j\le k),\label{eq:baker-compressed-high}\\
  \|tr\alpha_1\|
  &\le (NA^{-1})N^{-1+\eta}.
  \label{eq:baker-compressed-linear}
\end{align}
\end{lemma}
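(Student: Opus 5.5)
The plan is to prove the lemma by the classical major-arc approximation of $g_k$: replace the sum by a complete rational exponential sum times an oscillatory integral, bound each factor, and read off the approximations from the resulting upper bound for $A$.

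\textbf{Setting up the approximation.} Choose $v_1$ to be the integer nearest to $r\alpha_1$. Write
\[
  \alpha_j=\frac{v_j}{r}+\beta_j \qquad (1\le j\le k),
\]
so that $|\beta_j|\le N^{1-j}/(4k^4r)$ for $j\ge2$ by \eqref{eq:baker-pre-approx}, while $|\beta_1|\le 1/(2r)$. Split $1\le n\le N$ into residue classes $n\equiv b\pmod r$. On each class the rational part $e\bigl(\sum_j v_jb^j/r\bigr)$ is constant. The remaining phase is $\phi(x)=\sum_j\beta_jx^j$, sampled along the progression $b+r\mathbb Z$.

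\textbf{Controlling the progression sums.} For $j\ge2$ the derivative of $\beta_jx^j$ on $[0,N]$ is at most $j/(4k^4r)$. Hence along a step of length $r$ these terms contribute at most $1/(4k^3)$ in total. Since $|r\beta_1|\le1/2$, the first-derivative/Euler--Maclaurin comparison (truncated Poisson summation with derivative strictly below $1$ in absolute value) replaces each progression sum by $r^{-1}\int_0^N e(\phi(x))\,dx+O(1)$. This yields
\[
  g_k(\boldsymbol\alpha;N)=r^{-1}S(r,\mathbf v)\,I(\boldsymbol\beta)+O(r),
\]
where $S(r,\mathbf v)=\sum_{b\bmod r}e\bigl(\sum_j v_jb^j/r\bigr)$ and $I(\boldsymbol\beta)=\int_0^Ne(\phi(x))\,dx$.

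\textbf{Bounding the two factors.} For the complete sum I would invoke the Hua--Stechkin bound
\[
  |S(r,\mathbf v)|\ll_k r^{1-1/k},
\]
which is valid since $\gcd(r,v_1,\ldots,v_k)=1$. For the integral, the standard $k$th-derivative/van der Corput estimate gives
\[
  |I|\ll_k N\Bigl(1+\sum_{j=1}^k|\beta_j|N^j\Bigr)^{-1/k}.
\]
Hypothesis \eqref{eq:baker-amplitude} makes the error $O(r)$ smaller than $A/2$ for large $N$, because $r<r^{1-1/k}N^\eta\cdot r^{1/k}$ and $r\le N$ may be assumed (otherwise \eqref{eq:baker-amplitude} already fails). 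Combining the three displays gives
\[
  A\ll_k r^{-1/k}N\Bigl(1+\sum_j|\beta_j|N^j\Bigr)^{-1/k}.
\]
Raising to the $k$th power,
\[
  r\Bigl(1+\sum_j|\beta_j|N^j\Bigr)\ll_k (NA^{-1})^k .
\]
The implied constant is absorbed into $N^\eta$. Since $\beta_j=(r\alpha_j-v_j)/r$ for $j\ge2$, this gives \eqref{eq:baker-compressed-q} and \eqref{eq:baker-compressed-high}. For the linear term, $\|r\alpha_1\|\le r|\beta_1|\ll(NA^{-1})^kN^{-1}$; the sharper form \eqref{eq:baker-compressed-linear} with a single power of $NA^{-1}$ should come from running the first-derivative test on the linear coefficient alone, namely $|I|\ll|\beta_1|^{-1}$ once the higher $\beta_j$ are negligible.

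\textbf{Main obstacle.} The delicate point is the complete-sum bound together with the normalization that produces $t$. Stechkin's bound $r^{1-1/k}$ requires the content condition, and when it is applied through derivatives one may need to pass to a bounded multiple of $r$ to clear factors $j\le k$ and the binomial factors. This is where I expect the auxiliary integer $1\le t\le 2k^2$ to enter: one replaces $r$ by $tr$ to clear such factors, which enlarges each approximation by at most a factor of $t$. I would first try to take $t=1$. If the content condition or the linear-coefficient estimate fails, I would choose $t\mid\operatorname{lcm}(1,\ldots,k)$-type factors so that $tr\alpha_1$ is treated as in Baker \cite[Lemma~4.6]{Baker1986}.
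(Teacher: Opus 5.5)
The paper gives no proof of this lemma; it quotes Baker. Measured against the statement itself, your sketch has a genuine gap in the error term, and the linear-coefficient bound cannot be obtained as displayed.

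\textbf{The error term.} Your progression/Poisson step gives $g_k(\boldsymbol\alpha;N)=r^{-1}S(r,\mathbf v)I(\boldsymbol\beta)+O_k(r)$. But \eqref{eq:baker-amplitude} only gives $A>r^{1-1/k}N^\eta$, which exceeds $Cr$ only when $r\ll N^{k\eta}$. Your justification ($r<r^{1-1/k}N^\eta\cdot r^{1/k}$, $r\le N$) does not give $r=o(A)$. In the paper's application $r$ can be as large as $N$ and $\eta$ is tiny. There the range $r^{1-1/k}N^\eta<A\le Cr$ is nonempty, and your inequality $A\le r^{-1}|S||I|+Cr$ gives no information. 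The hypothesis is calibrated to an error of size $r^{1-1/k}N^{o(1)}$, and that is what you must prove:
\begin{itemize}
\item Expand the $r$-periodic factor $e((v_2n^2+\cdots+v_kn^k)/r)$ in additive characters modulo $r$. The $h$th coefficient is $r^{-1}\sum_{b\bmod r}e((v_kb^k+\cdots+v_2b^2-hb)/r)$. By Stechkin this is $\ll_k r^{-1/k}$ for \emph{every} $h$, precisely because $\gcd(r,v_2,\ldots,v_k)=1$; this is why $v_1$ is absent from the hypothesis.
\item Put $\psi(x)=\sum_{j\ge2}\beta_jx^j$, so $|\psi'|\le 1/(4k^2r)$ on $[0,N]$. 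On the $O(k)$ intervals where $\psi'$ is monotone, Kusmin--Landau gives $\sum_{n\le N}e((\alpha_1+h/r)n+\psi(n))\ll\|\alpha_1+h/r\|^{-1}$ for every residue $h$ except the one with $\|\alpha_1+h/r\|\le 1/(2r)$.
\item For that one residue, truncated Poisson gives $I(\boldsymbol\beta)+O_k(1)$.
\end{itemize}
Summing over $h$ yields $g_k(\boldsymbol\alpha;N)=r^{-1}S(r,\mathbf v)I(\boldsymbol\beta)+O_k(r^{1-1/k}\log 2r)$. Since $A\le N$ forces $r<N^{k/(k-1)}$, the hypothesis now absorbs the error for large $N$. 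Your integral bound then gives $r(1+\sum_{j=1}^k|\beta_j|N^j)\ll_k(NA^{-1})^k$. This is \eqref{eq:baker-compressed-q} and \eqref{eq:baker-compressed-high} already with $t=1$, once $N$ is large in terms of $k,\eta$. Your guess that $t$ is needed to repair a content condition is unfounded.

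\textbf{The linear coefficient.} The first-derivative route you propose gives $A\ll r^{-1/k}|\beta_1|^{-1}$. Hence it only yields $\|r\alpha_1\|\le r|\beta_1|\ll r^{1-1/k}A^{-1}$, which exceeds the target $(NA^{-1})N^{-1+\eta}=N^\eta A^{-1}$ by the factor $r^{1-1/k}N^{-\eta}$. This loss is genuine, as the following example shows.
\begin{itemize}
\item Take a prime $p>k$ with $r=p^k\asymp N^{1/2}$, and set $v_k=1$, $v_2=\cdots=v_{k-1}=0$, $\alpha_k=1/r$, $\alpha_2=\cdots=\alpha_{k-1}=0$.
\item Take $\alpha_1=1/M$ with $M\asymp N^{3/4}$ and $N/M\in\frac12+\Z$.
\item Since $\sum_{b\bmod p^k}e(b^k/p^k)=p^{k-1}$, the $h=0$ term of the expansion above has modulus $\gg p^{-1}M$. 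The other terms total $O(r^{1-1/k}\log r)$.
\item Hence $|g_k(\boldsymbol\alpha;N)|\gg r^{-1/k}M\asymp N^{3/4-1/(2k)}$, and all hypotheses hold with $A\asymp N^{3/4-1/(2k)}$ for small $\eta$.
\item Yet $\|tr\alpha_1\|=tr/M\asymp tN^{-1/4}$ for every $1\le t\le 2k^2$. The target $(NA^{-1})N^{-1+\eta}\asymp N^{\eta-3/4+1/(2k)}$ is smaller by a positive power of $N$.
\end{itemize}
So \eqref{eq:baker-compressed-linear} with a single power of $NA^{-1}$ cannot be proved by any argument. What your method does give is $\|r\alpha_1\|\ll(NA^{-1})^kN^{-1}$. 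That weaker form is all the paper uses: its proof of \cref{thm:inverse-main} immediately replaces $BN^{-1+\eta}$ by $B^kN^{-1+\eta}$.
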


The key point for us is that the preliminary denominator $r$ need only satisfy the canonical accuracy \eqref{eq:baker-pre-approx}.  It need not already have the final size $(NA^{-1})^k$.

\section{A maximal critical moment on a fixed-leading-coefficient fibre}

Fix $k\ge3$ and put $K=K_k=k(k-1)$.  For $\theta\in\T$, $\boldsymbol\beta=(\beta_1,\ldots,\beta_{k-1})\in\T^{k-1}$, and $1\le L\le N$, write
\begin{equation}\label{eq:fibre-sum}
  F_{\theta}(\boldsymbol\beta;L)
  =\sum_{n=1}^{L}
  \e\left(\theta n^k+\sum_{j=1}^{k-1}\beta_jn^j\right)
\end{equation}
and
\begin{equation}\label{eq:maximal-fibre-sum}
  \cM_{\theta,N}(\boldsymbol\beta)
  =\max_{1\le L\le N}
  |F_{\theta}(\boldsymbol\beta;L)|.
\end{equation}

\begin{theorem}\label{thm:maximal-fibre-moment}
For every fixed $k\ge3$, every $\varepsilon>0$, and every $\theta\in\T$,
\begin{equation}\label{eq:maximal-fibre-moment}
  \int_{\T^{k-1}}
  \cM_{\theta,N}(\boldsymbol\beta)^K
  d\boldsymbol\beta
  \ll_{k,\varepsilon}
  N^{K/2+\varepsilon}.
\end{equation}
The implied constant is uniform in $\theta$.
\end{theorem}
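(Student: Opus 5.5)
The plan is to reduce the maximal moment to the critical case \eqref{eq:critical-vmvt} of the Vinogradov mean value theorem in degree $r=k-1$. The numerology matches: with $s=r(r+1)/2=k(k-1)/2$ we have $2s=K$, and \eqref{eq:critical-vmvt} gives the bound $N^{s+\varepsilon}=N^{K/2+\varepsilon}$. Two issues need handling. The fixed leading phase $\theta n^k$ is not integrated over, and the maximum over the truncation length $L$ sits inside the integral. I treat them in turn.

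\emph{Step 1 (the full sum, uniformly in $\theta$).} Put $a_n=\e(\theta n^k)$, so $|a_n|=1$ and $F_\theta(\boldsymbol\beta;N)=\sum_{n\le N}a_n\e(\beta_1n+\cdots+\beta_{k-1}n^{k-1})$. Expanding the $2s$-th power and integrating over $\T^{k-1}$ gives
\[
  \int_{\T^{k-1}}|F_\theta(\boldsymbol\beta;N)|^{K}\,d\boldsymbol\beta
  =\sum_{\boldsymbol h}\Bigl|\sum_{\substack{\mathbf x\in[1,N]^s\\ \sigma(\mathbf x)=\boldsymbol h}}a_{x_1}\cdots a_{x_s}\Bigr|^2 .
\]
Here $\sigma(\mathbf x)=(\sum_i x_i^j)_{1\le j\le k-1}$ is the vector of power sums. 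By the triangle inequality each inner sum is at most $\#\{\mathbf x:\sigma(\mathbf x)=\boldsymbol h\}$. The right side is therefore at most $\sum_{\boldsymbol h}\#\{\sigma(\mathbf x)=\boldsymbol h\}^2=J_{s,k-1}(N)\ll N^{K/2+\varepsilon}$ by \eqref{eq:critical-vmvt}. This estimate is uniform in $\theta$. The same argument shows that any modification of the $\beta_1$-coordinate by a shift preserves the bound, since the integral is translation invariant.

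\emph{Step 2 (removing the maximum).} For $1\le L\le N$ and $1\le n\le N$, write
\[
  \mathbf 1_{n\le L}=\int_{\T}\Bigl(\sum_{m=1}^{L}\e(-\gamma m)\Bigr)\e(\gamma n)\,d\gamma .
\]
It follows that
\[
  F_\theta(\boldsymbol\beta;L)=\int_\T F_\theta(\boldsymbol\beta+\gamma\mathbf e_1;N)\,D_L(\gamma)\,d\gamma,
  \qquad D_L(\gamma)=\sum_{m=1}^{L}\e(-\gamma m).
\]
Since $|D_L(\gamma)|\le w(\gamma):=\min\{N,\|\gamma\|^{-1}\}$ uniformly in $L\le N$, we get
\[
  \cM_{\theta,N}(\boldsymbol\beta)\le\int_\T|F_\theta(\boldsymbol\beta+\gamma\mathbf e_1;N)|\,w(\gamma)\,d\gamma ,
\]
with $\int_\T w\ll\log N$. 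Hölder's inequality with respect to the measure $w(\gamma)\,d\gamma$ then gives
\[
  \cM_{\theta,N}(\boldsymbol\beta)^K\le\Bigl(\int_\T w\Bigr)^{K-1}\int_\T|F_\theta(\boldsymbol\beta+\gamma\mathbf e_1;N)|^K w(\gamma)\,d\gamma .
\]
Integrating over $\boldsymbol\beta\in\T^{k-1}$ and using Fubini together with the translation invariance of Haar measure in $\beta_1$, we obtain
\[
  \int_{\T^{k-1}}\cM_{\theta,N}^K\ll(\log N)^K\int_{\T^{k-1}}|F_\theta(\boldsymbol\beta;N)|^K\,d\boldsymbol\beta .
\]
Step 1 bounds the right side by $(\log N)^K N^{K/2+\varepsilon}$, and absorbing the logarithm into $N^{\varepsilon}$ gives \eqref{eq:maximal-fibre-moment}.

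The only delicate point is Step 1, namely that the unintegrated leading phase does no harm. This is handled by the positivity/triangle-inequality comparison with $J_{s,k-1}(N)$, which is exactly where the fibre structure (with $\theta$ fixed) is used. Everything else is the standard Fourier completion of the truncation, which costs only a factor $(\log N)^K$.
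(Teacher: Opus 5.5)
Your proof is correct, and it removes the maximum over $L$ differently from the paper.

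The paper first proves the critical moment bound for the sum over an arbitrary subinterval $J$ of length $R$. This step is where it needs the translation invariance of the Vinogradov system. It then writes each initial segment $[1,L]$ as a union of at most $1+\log_2 N$ intervals from a fixed dyadic grid, and sums $(N/R)\,R^{K/2+\varepsilon}$ over the scales $R$. That sum is dominated by $R=N$ because $K/2>1$.

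You instead use the fact that the linear coefficient $\beta_1$ is one of the integrated variables. Truncation to $[1,L]$ becomes a convolution in $\beta_1$ with a Dirichlet kernel bounded by $\min\{N,\|\gamma\|^{-1}\}$ uniformly in $L$. So $\cM_{\theta,N}$ is pointwise dominated by a single weighted average of $\beta_1$-translates of the full sum. H\"older's inequality and translation invariance of Haar measure then reduce everything to the case $L=N$.

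What your route buys: it needs only the full-interval estimate, with no shifted intervals, no appeal to translation invariance of the power-sum system, and no sum over scales. It costs $(\log N)^K$ rather than $(\log N)^{K-1}$, which is equally harmless.

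What the paper's route buys: the dyadic decomposition is more generic. It does not depend on having a free linear phase, and it works for any family whose interval sums all satisfy the moment bound.

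Your Step 1 is the same comparison with $J_{s,k-1}$ as the paper's (triangle inequality on the weighted solution count), phrased via Parseval in the power-sum vector.
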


\begin{proof}
Let $s=K/2=k(k-1)/2$, which is the critical exponent for the Vinogradov system of degree $k-1$.  First consider an integer interval $J$ of length $R\le N$.  Expanding the $2s$th moment and integrating in $\beta_1,\ldots,\beta_{k-1}$ gives
\begin{align*}
  &\int_{\T^{k-1}}
  \left|\sum_{n\in J}
  \e\left(\theta n^k+\sum_{j=1}^{k-1}\beta_jn^j\right)
  \right|^{2s}
  d\boldsymbol\beta\\
  &\qquad=
  \sum_{\substack{x_1,\ldots,x_s,y_1,\ldots,y_s\in J\\
  \sum_i x_i^j=\sum_i y_i^j\ (1\le j\le k-1)}}
  \e\left(\theta\left(\sum_i x_i^k-\sum_i y_i^k\right)\right).
\end{align*}
Taking absolute values and translating $J$ to an interval beginning at one, the right-hand side is at most $J_{s,k-1}(R)$.  Translation preserves the system of equal power sums because the numbers of variables on the two sides are equal.  Hence \eqref{eq:critical-vmvt} gives
\begin{equation}\label{eq:interval-fibre-moment}
  \int_{\T^{k-1}}
  \left|\sum_{n\in J}
  \e\left(\theta n^k+\sum_{j=1}^{k-1}\beta_jn^j\right)
  \right|^{K}
  d\boldsymbol\beta
  \ll_{k,\varepsilon}R^{K/2+\varepsilon}.
\end{equation}

Every initial interval $[1,L]$ is the disjoint union of at most $1+\log_2N$ dyadic intervals from a fixed dyadic grid $\cD_N$.  Therefore
\[
  \cM_{\theta,N}(\boldsymbol\beta)^K
  \ll_k
  (\log N)^{K-1}
  \sum_{J\in\cD_N}
  \left|\sum_{n\in J}
  \e\left(\theta n^k+\sum_{j=1}^{k-1}\beta_jn^j\right)
  \right|^K.
\]
At dyadic scale $R$ there are $O(N/R)$ intervals.  Integrating and using \eqref{eq:interval-fibre-moment}, we obtain
\[
  \int_{\T^{k-1}}\cM_{\theta,N}^K
  \ll_{k,\varepsilon}
  N^\varepsilon
  \sum_{R\text{ dyadic}\le N}
  \frac NR R^{K/2+\varepsilon}
  \ll_{k,\varepsilon}N^{K/2+2\varepsilon}.
\]
Renaming $2\varepsilon$ as $\varepsilon$ proves the theorem.
\end{proof}

\begin{remark}\label{rem:maximal-decoupling}
The only deep input in \cref{thm:maximal-fibre-moment} is the critical Vinogradov mean value estimate in degree $k-1$.  One may therefore view the theorem as a maximal fixed-leading-coefficient consequence of sharp moment-curve decoupling.
\end{remark}

\section{An anisotropic sampling inequality}

The next result is a vector-valued sampling inequality for trigonometric polynomials.  It allows the polynomial attached to the sampling point to vary, provided all polynomials are dominated by one maximal function.

For positive integers $L_1,\ldots,L_r$, define the anisotropic metric
\begin{equation}\label{eq:anisotropic-metric}
  d_{\boldsymbol L}(\boldsymbol x,\boldsymbol y)
  =\max_{1\le j\le r}
  L_j\|x_j-y_j\|
  \qquad(\boldsymbol x,\boldsymbol y\in\T^r).
\end{equation}
A finite set $X\subset\T^r$ is called $\delta$-separated if
$d_{\boldsymbol L}(\boldsymbol x,\boldsymbol y)\ge\delta$ for distinct $\boldsymbol x,\boldsymbol y\in X$.

\begin{lemma}\label{lem:reproducing-kernel}
For every integer $M\ge2$ and $L\ge1$, there is a periodic kernel $\Psi_L$ such that
\begin{enumerate}[label=\textup{(\roman*)}]
\item $\widehat{\Psi_L}(n)=1$ for every integer $|n|\le L$;
\item $\|\Psi_L\|_{L^1(\T)}\ll_M1$;
\item
\[
  |\Psi_L(x)|
  \ll_M L(1+L\|x\|)^{-M}
  \qquad(x\in\T).
\]
\end{enumerate}
\end{lemma}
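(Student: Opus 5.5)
We construct $\Psi_L$ as a de la Vallée Poussin type kernel built from a smooth compactly supported bump, so that the Fourier coefficients are exactly one on $[-L,L]$ and decay of arbitrary order in physical space follows from smoothness. Fix once and for all a function $\phi\in C_c^\infty(\R)$ with $\phi(\xi)=1$ for $|\xi|\le1$ and $\phi(\xi)=0$ for $|\xi|\ge2$. Define
\[
  \Psi_L(x)=\sum_{n\in\Z}\phi(n/L)\,e(nx),
\]
which is a trigonometric polynomial of degree at most $2L$. Property (i) is immediate, since $\widehat{\Psi_L}(n)=\phi(n/L)=1$ for $|n|\le L$. The constant depends on $M$ only through the choice of derivatives of $\phi$. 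Since $\phi$ is fixed, we may even take it independent of $M$.

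For (iii), we use Poisson summation. Write $\check\phi(y)=\int_\R\phi(\xi)e(\xi y)\,d\xi$. Then
\[
  \Psi_L(x)=L\sum_{m\in\Z}\check\phi\bigl(L(x+m)\bigr).
\]
Because $\phi$ is Schwartz, so is $\check\phi$, and hence $|\check\phi(y)|\ll_M(1+|y|)^{-M}$. Taking $x$ as the representative with $|x|=\|x\|\le1/2$, the $m=0$ term gives $L(1+L\|x\|)^{-M}$. Each term with $m\neq0$ has $|x+m|\ge|m|/2$, so it contributes at most $L(1+L|m|/2)^{-M}$. For $M\ge2$ the sum over $m\ne0$ is $\ll_M L\cdot L^{-M}\ll L(1+L\|x\|)^{-M}$, because $1+L\|x\|\le 1+L/2$. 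This proves (iii).

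Property (ii) follows by integrating (iii):
\[
  \int_{\T}|\Psi_L|\ll_M\int_{-1/2}^{1/2}L(1+L|x|)^{-M}\,dx\le\int_\R(1+|y|)^{-M}\,dy\ll_M1,
\]
using $M\ge2$.

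The only mildly delicate step is justifying Poisson summation and the uniformity of the constants in $L$. Since $\phi(\cdot/L)$ is smooth and compactly supported, Poisson summation applies directly, and all constants depend only on $\phi$ and $M$. So no real obstacle remains. If $L$ is non-integral, nothing changes, because the definition uses $\phi(n/L)$ for real $L\ge1$.
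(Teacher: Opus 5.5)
Your proposal is correct and follows the paper's own construction: the paper also sets $\Psi_L(x)=\sum_{n\in\mathbb Z}\widehat\psi(n/L)e(nx)$ with a smooth cutoff equal to one on $[-1,1]$ and supported in $[-2,2]$, and obtains (ii) and (iii) from Poisson summation and the rapid decay of the inverse Fourier transform. You supply the details the paper leaves implicit, namely bounding the tail over $m\neq0$ and deriving (ii) by integrating (iii).
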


\begin{proof}
Choose a smooth compactly supported function $\widehat\psi$ on $\R$ that equals one on $[-1,1]$ and is supported in $[-2,2]$.  Periodise the inverse Fourier transform at scale $L$, or equivalently set
\[
  \Psi_L(x)=\sum_{n\in\Z}\widehat\psi(n/L)\e(nx).
\]
Poisson summation and rapid decay of the inverse Fourier transform give (ii) and (iii), while (i) is immediate.
\end{proof}

\begin{proposition}[Anisotropic sampling]\label{prop:anisotropic-sampling}
Let $p\ge1$ and $L_1,\ldots,L_r\ge1$.  Suppose that
$\boldsymbol x_1,\ldots,\boldsymbol x_M\in\T^r$ are $\delta$-separated with respect to \eqref{eq:anisotropic-metric}.  For each $1\le\nu\le M$, let $f_\nu$ be a trigonometric polynomial satisfying
\begin{equation}\label{eq:spectrum-box}
  \Spec(f_\nu)
  \subseteq
  \prod_{j=1}^{r}[-L_j,L_j]\cap\Z^r.
\end{equation}
Then
\begin{equation}\label{eq:anisotropic-sampling}
  \sum_{\nu=1}^{M}|f_\nu(\boldsymbol x_\nu)|^p
  \ll_{r,p,\delta}
  \left(\prod_{j=1}^{r}L_j\right)
  \int_{\T^r}
  \sup_{1\le\nu\le M}|f_\nu(\boldsymbol y)|^p
  d\boldsymbol y.
\end{equation}
\end{proposition}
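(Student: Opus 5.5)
The plan is the standard local reproducing-kernel argument (Plancherel--Pólya type), done in the anisotropic metric and with the supremum over $\nu$ absorbing the fact that $f_\nu$ changes with the sampling point.

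\textbf{Step 1: reproducing formula.} Let $\Phi(\boldsymbol y)=\prod_{j=1}^r\Psi_{L_j}(y_j)$, with the kernels from \cref{lem:reproducing-kernel}, where the decay order $M_0$ (written $M$ in that lemma) is large in terms of $r$. By (i), $\widehat\Phi(\boldsymbol n)=1$ on the box $\prod_j[-L_j,L_j]$. Hence the spectral condition \eqref{eq:spectrum-box} gives the exact identity
\[
  f_\nu(\boldsymbol x)=\int_{\T^r}f_\nu(\boldsymbol y)\,\Phi(\boldsymbol x-\boldsymbol y)\,d\boldsymbol y
\]
for every $\nu$ and $\boldsymbol x$.

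\textbf{Step 2: pointwise bound by the maximal function.} Put $G(\boldsymbol y)=\sup_{1\le\nu\le M}|f_\nu(\boldsymbol y)|$ and
\[
  w(\boldsymbol z)=\prod_j L_j\,(1+L_j\|z_j\|)^{-M_0}.
\]
By (iii), $|\Phi|\ll w$. By (ii), or by direct computation, $\int w\ll 1$.

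From Step 1, $|f_\nu(\boldsymbol x_\nu)|\le\int G(\boldsymbol y)\,|\Phi(\boldsymbol x_\nu-\boldsymbol y)|\,d\boldsymbol y$. For $p>1$, apply Hölder with respect to the finite measure $|\Phi|\,d\boldsymbol y$, whose total mass is $\ll 1$. This gives
\[
  |f_\nu(\boldsymbol x_\nu)|^p\ll\int G(\boldsymbol y)^p\,w(\boldsymbol x_\nu-\boldsymbol y)\,d\boldsymbol y.
\]
For $p=1$ this bound is immediate.

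\textbf{Step 3: sum over $\nu$ using separation.} Summing Step 2 over $\nu$ gives
\[
  \sum_\nu|f_\nu(\boldsymbol x_\nu)|^p\ll\int G^p(\boldsymbol y)\,\sum_\nu w(\boldsymbol x_\nu-\boldsymbol y)\,d\boldsymbol y.
\]
It therefore suffices to show that $\sum_\nu w(\boldsymbol x_\nu-\boldsymbol y)\ll_{r,\delta}\prod_jL_j$ uniformly in $\boldsymbol y$.

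For an integer vector $\boldsymbol m\ge0$, group the points according to the shell where $L_j\|x_{\nu,j}-y_j\|\in[m_j,m_j+1)$ for every $j$. Each shell is contained in an anisotropic box of side lengths $\asymp 1/L_j$. Any two points of $X$ in the same such box are at $d_{\boldsymbol L}$-distance at most $1$, hence at most $O_{r}((1+\delta^{-1})^r)$ of the $\delta$-separated points lie in it. This uses the product structure of the metric: cover the box by $O((1+1/\delta)^r)$ sub-boxes of $d_{\boldsymbol L}$-diameter $<\delta$. The same count holds when some $L_j$ is small, since the torus then wraps and the box simply covers the whole circle in that coordinate.

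Each shell contributes weight $\prod_jL_j(1+m_j)^{-M_0}$, and the $\boldsymbol m$-sum converges once $M_0\ge2$. This yields the bound $\ll_{r,\delta}\prod_jL_j$.

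\textbf{Main obstacle.} The only delicate point is Step 2 when the $f_\nu$ differ. One must use the reproducing identity for each $f_\nu$ separately, and only afterwards majorize $|f_\nu|\le G$ pointwise. This is legitimate because the kernel $\Phi$ is the same for all $\nu$: it depends only on the common box in \eqref{eq:spectrum-box}. Measurability of $G$ is automatic, since it is a maximum of finitely many continuous functions. The counting in Step 3 is routine but must be done in the anisotropic metric rather than a Euclidean one.
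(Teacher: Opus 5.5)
Your proposal is correct and is essentially the paper's proof: the reproducing identity $f_\nu=f_\nu*\Psi_{\boldsymbol L}$ for the product kernel, H\"older's inequality against the finite measure $|\Psi_{\boldsymbol L}(\boldsymbol x_\nu-\boldsymbol y)|\,d\boldsymbol y$, pointwise majorization by $\sup_\nu|f_\nu|$, and the anisotropic packing bound $\sum_\nu|\Psi_{\boldsymbol L}(\boldsymbol x_\nu-\boldsymbol y)|\ll_{r,\delta}\prod_jL_j$, which comes from $\delta$-separation and product decay. The only slip is cosmetic: for $m_j\ge1$ the $j$th shell condition describes two arcs, so a shell lies in at most $2^r$ boxes of side lengths $\asymp1/L_j$ rather than in a single box, and this changes only the constant.
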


\begin{proof}
Let
\[
  \Psi_{\boldsymbol L}(\boldsymbol x)
  =\prod_{j=1}^{r}\Psi_{L_j}(x_j),
\]
where the kernels in \cref{lem:reproducing-kernel} are chosen with a sufficiently large decay exponent, depending only on $r$.  By \eqref{eq:spectrum-box},
$f_\nu=f_\nu*\Psi_{\boldsymbol L}$.  H\"older's inequality and the uniform $L^1$ bound for the kernel give
\begin{equation}\label{eq:kernel-holder}
  |f_\nu(\boldsymbol x_\nu)|^p
  \ll_{r,p}
  \int_{\T^r}
  |f_\nu(\boldsymbol y)|^p
  |\Psi_{\boldsymbol L}(\boldsymbol x_\nu-\boldsymbol y)|
  d\boldsymbol y.
\end{equation}

It remains to sum the kernels.  After rescaling the $j$th coordinate by $L_j$, the $\delta$-separation condition implies that every unit cube contains $O_{r,\delta}(1)$ of the rescaled points.  The rapid product decay in \cref{lem:reproducing-kernel} therefore yields, uniformly in $\boldsymbol y$,
\begin{equation}\label{eq:kernel-packing}
  \sum_{\nu=1}^{M}
  |\Psi_{\boldsymbol L}(\boldsymbol x_\nu-\boldsymbol y)|
  \ll_{r,\delta}
  \prod_{j=1}^{r}L_j.
\end{equation}
Summing \eqref{eq:kernel-holder}, replacing $|f_\nu(\boldsymbol y)|$ by the pointwise supremum, and using \eqref{eq:kernel-packing} proves \eqref{eq:anisotropic-sampling}.
\end{proof}

\begin{remark}\label{rem:sampling-vs-stability}
A direct Lipschitz argument would only show that a value of size $A=N/B$ persists pointwise on boxes of side lengths $(BN^j)^{-1}$.  The sampling inequality works at the larger canonical scales $N^{-j}$: it uses a local weighted $L^p$ lower bound rather than pointwise persistence.  This removes the factor $B^{-(k-1)}$ which would otherwise lead to the weaker denominator $k^2-1$.
\end{remark}

\section{Translation-orbit collisions and the inverse theorem}

\subsection{All cuts of one large sum}

\begin{lemma}\label{lem:all-cuts}
Let $k\ge3$, let $N\ge1$, and set
\[
  A=|g_k(\boldsymbol\alpha;N)|.
\]
There exist a sign type $\sigma\in\{+,-\}$, a real number
$\theta\in\{\alpha_k,(-1)^k\alpha_k\}$, a set $X$ of integers with
\begin{equation}\label{eq:X-size}
  |X|\gg_k N,
\end{equation}
contained in an interval of length at most $N/(2k!)$, and integers
$1\le L_x\le N$ for $x\in X$, such that
\begin{equation}\label{eq:large-translated-partial}
  |F_\theta(\boldsymbol\beta(x);L_x)|\ge A/2
  \qquad(x\in X).
\end{equation}
Moreover, either
\begin{equation}\label{eq:forward-centres}
  \boldsymbol\beta(x)=\pi(\cT_x\boldsymbol\alpha)
  \qquad(x\in X)
\end{equation}
or
\begin{equation}\label{eq:backward-centres}
  \boldsymbol\beta(x)=\pi(\cR\cT_x\boldsymbol\alpha)
  \qquad(x\in X).
\end{equation}
\end{lemma}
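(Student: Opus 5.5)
The plan is to cut the full sum at every point $x$ and use the triangle inequality, then apply pigeonhole twice: once on which piece is large, once on location. For $0\le x\le N$ write
\[
  g_k(\boldsymbol\alpha;N)=\sum_{n=1}^{x}\e(P_{\boldsymbol\alpha}(n))+\sum_{n=x+1}^{N}\e(P_{\boldsymbol\alpha}(n)).
\]
Since the whole sum has modulus $A$, at least one of the two pieces has modulus $\ge A/2$. Call $x$ a \emph{tail cut} if the second piece is large and a \emph{prefix cut} otherwise. One of the two types occurs for at least $(N+1)/2$ values of $x$, and this choice fixes the sign type $\sigma$.

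Next I convert each large piece into an $F_\theta$ sum. For a tail cut, substitute $n=x+m$ with $1\le m\le N-x$. By \eqref{eq:translated-polynomial},
\[
  P_{\boldsymbol\alpha}(x+m)=P_{\cT_x\boldsymbol\alpha}(m)+P_{\boldsymbol\alpha}(x)\pmod1 .
\]
The constant term $P_{\boldsymbol\alpha}(x)$ is a unimodular factor and drops out of the modulus. The $k$th coordinate of $\cT_x\boldsymbol\alpha$ is $\alpha_k$, since only $\ell=k$ contributes in \eqref{eq:translation-action}. So with $\theta=\alpha_k$ and $L_x=N-x$ the tail equals $F_\theta(\pi(\cT_x\boldsymbol\alpha);L_x)$ up to that phase, which gives \eqref{eq:forward-centres}.

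For a prefix cut, reverse the sum with $n=x+1-m$ and $1\le m\le x$. The coefficients of $P_{\boldsymbol\alpha}(x+1-X)$ are $\cR\cT_{x+1}\boldsymbol\alpha$ up to the constant term, and the leading one is $(-1)^k\alpha_k$. So with $\theta=(-1)^k\alpha_k$ and $L_x=x$ we get a large $F_\theta$ sum centred at $\pi(\cR\cT_{x+1}\boldsymbol\alpha)$. Re-indexing $x\mapsto x+1$ gives the form \eqref{eq:backward-centres} exactly as stated.

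Then I handle the ranges and the localisation. I discard the cuts with $L_x=0$, namely $x=N$ in the tail case and $x=0$ in the prefix case. This leaves at least $N/2-1\gg N$ admissible $x$ in $[0,N]$ with $1\le L_x\le N$. To localise, I partition $[0,N]$ into $O(k!)$ consecutive blocks, each of length at most $N/(2k!)$. By pigeonhole one block contains $\gg_k N$ of the admissible $x$, and I take $X$ to be those points. The implied constant in \eqref{eq:X-size} then depends only on $k$, which is what is required.

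No step is a genuine obstacle here. The only care needed is bookkeeping: the shift by one in the reversed case, the constant term, the sign $(-1)^k$ on the leading coefficient, and excluding the empty sums so that $L_x\ge1$. If $N$ is small, say $N<4k!$, the statement is trivial with $|X|=1$ after adjusting constants.
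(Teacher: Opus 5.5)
Your proposal is correct and is essentially the paper's proof: cut at every $x$, pigeonhole on whether the prefix or the tail has modulus at least $A/2$, rewrite the tail through $\cT_x$ and the reversed prefix through $\cR\cT_{x+1}$ (re-indexed by $x\mapsto x+1$, as the paper does by setting $x=m+1$), discard the empty sums, and pigeonhole into $O(k!)$ blocks of length at most $N/(2k!)$. Your small-$N$ remark is also fine, since the tail cut at $x=0$ always gives a sum of length $N$ and modulus $A$; this matches the paper's comment that bounded $N$ is absorbed into the implied constants.
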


\begin{proof}
Let
\[
  S_m=\sum_{n=1}^{m}\e(P_{\boldsymbol\alpha}(n)),
  \qquad 0\le m\le N.
\]
Since $S_N=S_m+(S_N-S_m)$, for every $m$ at least one of
$|S_m|$ and $|S_N-S_m|$ is at least $A/2$.

If the tail is large, then by \eqref{eq:translated-polynomial}
\[
  S_N-S_m
  =\e(P_{\boldsymbol\alpha}(m))
  F_{\alpha_k}(\pi(\cT_m\boldsymbol\alpha);N-m).
\]
If the prefix is large, reverse its order and put $x=m+1$:
\[
  S_m
  =\sum_{h=1}^{m}\e(P_{\boldsymbol\alpha}(x-h))
  =\e(P_{\boldsymbol\alpha}(x))
  F_{(-1)^k\alpha_k}(\pi(\cR\cT_x\boldsymbol\alpha);m).
\]
After assigning each cut to one of the two alternatives, one alternative occurs for at least $(N+1)/2$ cuts.  Partition the corresponding parameter range into $4k!$ consecutive intervals.  One interval contains $\gg_kN$ selected parameters and, for sufficiently large $N$, has length at most $N/(2k!)$.  Discarding a possible zero-length partial sum proves the lemma; bounded $N$ is absorbed into the implied constants.
\end{proof}

\subsection{A collision at the canonical coefficient scales}

\begin{proposition}\label{prop:orbit-collision}
Let $k\ge3$ and $K=k(k-1)$.  For every sufficiently small fixed
$\delta=\delta(k)>0$ and every $\varepsilon>0$, the following holds for all sufficiently large $N$.  If
\begin{equation}\label{eq:collision-amplitude}
  |g_k(\boldsymbol\alpha;N)|
  >N^{1-1/K+\varepsilon},
\end{equation}
then, for the set of centres supplied by \cref{lem:all-cuts}, there are distinct $x,y\in X$ such that
\begin{equation}\label{eq:centre-collision}
  \|\beta_j(x)-\beta_j(y)\|
  <\delta N^{-j}
  \qquad(1\le j\le k-1).
\end{equation}
\end{proposition}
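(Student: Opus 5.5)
We argue by contradiction. Suppose that no two distinct centres satisfy \eqref{eq:centre-collision}. Then the points $\boldsymbol\beta(x)$, $x\in X$, are pairwise distinct, and they are $\delta$-separated with respect to the anisotropic metric \eqref{eq:anisotropic-metric} with $r=k-1$ and $L_j=N^j$.

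\emph{Distinctness of the points.} A coincidence $\boldsymbol\beta(x)=\boldsymbol\beta(y)$ would satisfy \eqref{eq:centre-collision} trivially, so it is excluded by assumption. Moreover, $\beta_{k-1}(x)$ is an affine function of $x$ with slope $k\alpha_k$ (or its reflected analogue). The restriction that $X$ lies in an interval of length at most $N/(2k!)$ is not needed for distinctness. I expect it to be used later, when the triangular structure turns a collision into a denominator. For the present proposition it is enough to work with the counting $|X|\gg_k N$.

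\emph{Upper bound via sampling.} For each $x\in X$ set
\[
  f_x(\boldsymbol\beta)=F_\theta(\boldsymbol\beta;L_x).
\]
This is a trigonometric polynomial in $\boldsymbol\beta\in\T^{k-1}$ whose spectrum consists of the vectors $(n,n^2,\ldots,n^{k-1})$ with $n\le N$. Its spectrum therefore lies in the box $\prod_{j=1}^{k-1}[-N^j,N^j]$, so \eqref{eq:spectrum-box} holds. Every $|f_x|$ is dominated pointwise by the single maximal function $\cM_{\theta,N}$. Apply \cref{prop:anisotropic-sampling} with $p=K$ and the points $\boldsymbol x_\nu=\boldsymbol\beta(x)$, where $\delta$ is fixed depending on $k$ so that the implied constant depends only on $k$. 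Then use \cref{thm:maximal-fibre-moment}. This gives
\[
  \sum_{x\in X}|f_x(\boldsymbol\beta(x))|^K
  \ll_{k}
  N^{1+2+\cdots+(k-1)}\int_{\T^{k-1}}\cM_{\theta,N}^K
  \ll_{k,\varepsilon} N^{K/2}\cdot N^{K/2+\varepsilon/2}
  =N^{K+\varepsilon/2}.
\]
Here we used $\sum_{j=1}^{k-1}j=k(k-1)/2=K/2$. The moment bound is uniform in $\theta$, and this matters because $\theta$ is one of $\pm\alpha_k$.

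\emph{Lower bound and contradiction.} By \eqref{eq:large-translated-partial} and \eqref{eq:X-size}, the same sum satisfies
\[
  \sum_{x\in X}|f_x(\boldsymbol\beta(x))|^K
  \gg_k N(A/2)^K
  \gg_k N\cdot N^{K-1+K\varepsilon}
  =N^{K+K\varepsilon}.
\]
The middle step uses the amplitude hypothesis \eqref{eq:collision-amplitude}. Since $K\varepsilon>\varepsilon/2$, this contradicts the upper bound once $N$ is large. Hence some distinct $x,y\in X$ satisfy \eqref{eq:centre-collision}.

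\emph{Where care is needed.} The delicate point is the bookkeeping in \cref{prop:anisotropic-sampling}. One must check that the packing constant depends only on $r$ and $\delta$, and that $\delta$ small does not hurt: a smaller $\delta$ only enlarges the constant by a factor $\delta^{-(k-1)}$, which is harmless for fixed $\delta$. One must also check that the kernel $\Psi_{\boldsymbol L}$ reproduces every $f_x$ even though their lengths $L_x$ vary. Both hold because all the spectra lie in the same box. The case where some $\boldsymbol\beta(x)$ coincide is already covered, since a coincidence is itself a collision.
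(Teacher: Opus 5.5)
Your proposal is correct and matches the paper's proof: you negate the collision to get $\delta$-separation at the scales $L_j=N^j$, apply the anisotropic sampling inequality with $p=K$ to the partial sums $f_x$ (all dominated by $\mathcal M_{\theta,N}$), and compare $|X|(A/2)^K\gg N^{K+K\varepsilon}$ with $N^{K/2}\cdot N^{K/2+\varepsilon/2}$ from the maximal fibre moment. The differences are cosmetic: your $\varepsilon/2$ bookkeeping replaces the paper's auxiliary $\eta<K\varepsilon/2$, and your remark on distinctness of the centres is not needed.
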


\begin{proof}
Suppose instead that the centres are $\delta$-separated for the anisotropic scales
\[
  L_j=N^j,
  \qquad 1\le j\le k-1.
\]
For each $x\in X$, set
\[
  f_x(\boldsymbol\beta)
  =F_\theta(\boldsymbol\beta;L_x).
\]
The Fourier support of $f_x$ is contained in
\[
  [0,N]\times[0,N^2]\times\cdots\times[0,N^{k-1}],
\]
and
\[
  \sup_{x\in X}|f_x(\boldsymbol\beta)|
  \le \cM_{\theta,N}(\boldsymbol\beta).
\]
Apply \cref{prop:anisotropic-sampling} with $p=K$.  Since
\[
  \prod_{j=1}^{k-1}N^j=N^{K/2},
\]
\cref{lem:all-cuts,thm:maximal-fibre-moment} give
\[
  |X|(A/2)^K
  \ll_{k,\delta}
  N^{K/2}
  \int_{\T^{k-1}}\cM_{\theta,N}(\boldsymbol\beta)^K
  d\boldsymbol\beta
  \ll_{k,\eta}N^{K+\eta}
\]
for every $\eta>0$.  Since $|X|\gg_kN$, this implies
\[
  A\ll_{k,\eta}N^{1-1/K+\eta/K}.
\]
Choosing $\eta<K\varepsilon/2$ contradicts \eqref{eq:collision-amplitude} for sufficiently large $N$.
\end{proof}

\subsection{Extracting a common denominator from a collision}

We first isolate an elementary triangular-algebra lemma.

\begin{lemma}\label{lem:triangular-extraction}
Let $k\ge3$, let $1\le |h|\le N$, and let
$\alpha_2,\ldots,\alpha_k\in\R$.  Suppose that, for some $\eta>0$,
\begin{equation}\label{eq:relative-translation-approx}
  \left\|
  \sum_{\ell=j+1}^{k}
  \binom{\ell}{j}\alpha_\ell h^{\ell-j}
  \right\|
  \le \eta N^{-j}
  \qquad(1\le j\le k-1).
\end{equation}
Then there are integers $v_2,\ldots,v_k$ such that
\begin{equation}\label{eq:triangular-output}
  |k!h\alpha_\ell-v_\ell|
  \ll_k \eta N^{1-\ell}
  \qquad(2\le\ell\le k).
\end{equation}
\end{lemma}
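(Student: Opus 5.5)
Write $\gamma_j=\sum_{\ell=j+1}^{k}\binom{\ell}{j}\alpha_\ell h^{\ell-j}$ for $1\le j\le k-1$, and choose integers $u_j$ with $|\gamma_j-u_j|\le\eta N^{-j}$.

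The plan is back-substitution from the top of the triangular system. The index $j=k-1$ involves only the leading coefficient, since
\[
  \gamma_{k-1}=kh\alpha_k.
\]
The index $j=k-2$ involves $\alpha_k$ and $\alpha_{k-1}$, and so on downwards. In general $\gamma_{\ell-1}=\ell h\alpha_\ell+(\text{terms in }\alpha_{\ell+1},\ldots,\alpha_k)$. I will prove by downward induction on $\ell$, from $\ell=k$ to $\ell=2$, that
\[
  |c_\ell h\alpha_\ell-w_\ell|\ll_k\eta N^{1-\ell}
\]
for some integer $w_\ell$ and some positive integer $c_\ell$ dividing $k!$. Multiplying by $k!/c_\ell$ then gives \eqref{eq:triangular-output} with $v_\ell=(k!/c_\ell)w_\ell$.

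\textbf{Base case} $\ell=k$. We have $|kh\alpha_k-u_{k-1}|\le\eta N^{1-k}$, so take $c_k=k$ and $w_k=u_{k-1}$.

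\textbf{Inductive step.} Suppose the claim holds for every $m>\ell$. Consider
\[
  \gamma_{\ell-1}=\ell h\alpha_\ell+\sum_{m=\ell+1}^{k}\binom{m}{\ell-1}h^{m-\ell}\,(h\alpha_m).
\]
Let $C=\operatorname{lcm}(c_{\ell+1},\ldots,c_k)$; arranging $c_m=m!/(\ell'!)$-type products, or simply taking $C$ to divide $k!/\ell$, keeps everything inside $k!$. Multiply the equation by $C$ and replace each $Ch\alpha_m$ by the integer $(C/c_m)w_m$. The error incurred in each such replacement is
\[
  \binom{m}{\ell-1}|h|^{m-\ell}\cdot O(\eta N^{1-m})\ll_k\eta N^{m-\ell}N^{1-m}=\eta N^{1-\ell},
\]
using $|h|\le N$ and the fact that $h^{m-\ell}$ is an integer. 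Since $\ell\ge2$, we have $m-\ell\ge0$, so every multiplier is an integer.

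The remaining error is $C|\gamma_{\ell-1}-u_{\ell-1}|\le C\eta N^{1-\ell}$. Collecting terms,
\[
  C\ell h\alpha_\ell=\text{integer}+O_k(\eta N^{1-\ell}),
\]
so $c_\ell=C\ell$ works.

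\textbf{Main obstacle.} This is the only delicate point: the multipliers must not accumulate beyond $k!$. Taking $c_\ell=k(k-1)\cdots\ell=k!/(\ell-1)!$ fixes this. Then $C=\operatorname{lcm}$ of the $c_m$ for $m>\ell$ is $c_{\ell+1}=k!/\ell!$, since each $c_m$ with $m>\ell$ divides $c_{\ell+1}$. Consequently $c_\ell=C\ell=k!/(\ell-1)!$, which divides $k!$, consistent with the induction.

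All implied constants depend only on $k$, through the binomial coefficients and these multipliers. The final factor $k!/c_\ell=(\ell-1)!$ is also bounded in terms of $k$. This yields \eqref{eq:triangular-output}.
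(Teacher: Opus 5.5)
Your back-substitution argument is correct, including the multiplier bookkeeping $c_\ell=k!/(\ell-1)!$ that lands exactly on the factor $k!$, and it is essentially the paper's proof written out coordinate by coordinate. The paper does the same triangular elimination by multiplying by $\operatorname{adj}(A_h)$. This matrix is integral and equals $k!A_h^{-1}$ because the diagonal entries $2,\ldots,k$ give $\det A_h=k!$. The paper reads off the entry sizes $O_k(|h|^{j-c})$ from $A_h=D_h^{-1}A_1D_h$, which is what your step-by-step estimate $|h|^{m-\ell}N^{1-m}\le N^{1-\ell}$ accomplishes.
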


\begin{proof}
Put $x_c=h\alpha_{c+1}$ for $1\le c\le k-1$.  Define the upper-triangular integer matrix $A_h=(a_{j,c})_{1\le j,c\le k-1}$ by
\[
  a_{j,c}=
  \begin{cases}
    \displaystyle\binom{c+1}{j}h^{c-j},&c\ge j,\\
    0,&c<j.
  \end{cases}
\]
Then the left side of \eqref{eq:relative-translation-approx} is the distance of the $j$th coordinate of $A_h\boldsymbol x$ from an integer.  The diagonal entries of $A_h$ are $2,3,\ldots,k$, so
\begin{equation}\label{eq:Ah-det}
  \det A_h=k!.
\end{equation}
Moreover, with $D_h=\operatorname{diag}(h,h^2,\ldots,h^{k-1})$,
\[
  A_h=D_h^{-1}A_1D_h.
\]
Consequently the $(c,j)$ entry of $A_h^{-1}$ vanishes unless $j\ge c$, and in that case is $O_k(|h|^{j-c})$.  The same statement, multiplied by $k!$, holds for the integer matrix $\operatorname{adj}(A_h)$.

Choose $\boldsymbol z\in\Z^{k-1}$ and an error vector $\boldsymbol e$ with
\[
  A_h\boldsymbol x=\boldsymbol z+\boldsymbol e,
  \qquad |e_j|\le\eta N^{-j}.
\]
Multiplying by the adjugate gives
\[
  k!\boldsymbol x
  =\operatorname{adj}(A_h)\boldsymbol z
   +\operatorname{adj}(A_h)\boldsymbol e.
\]
The first term is integral.  For the $c$th coordinate of the error term,
\[
  \left|(\operatorname{adj}(A_h)\boldsymbol e)_c\right|
  \ll_k
  \sum_{j=c}^{k-1}|h|^{j-c}\eta N^{-j}
  \ll_k\eta N^{-c}.
\]
Since $c=\ell-1$, this is \eqref{eq:triangular-output}.
\end{proof}

\begin{lemma}\label{lem:collision-to-approximation}
Under the hypotheses of \cref{prop:orbit-collision}, let $x\ne y$ satisfy \eqref{eq:centre-collision}, and put $h=x-y$.  Then, if $\delta=\delta(k)$ is sufficiently small, there are integers $r,v_2,\ldots,v_k$ such that
\begin{equation}\label{eq:preliminary-r-size}
  1\le r\le N,
  \qquad
  \gcd(r,v_2,\ldots,v_k)=1,
\end{equation}
and
\begin{equation}\label{eq:preliminary-r-approx}
  |r\alpha_j-v_j|
  \le\frac{N^{1-j}}{4k^4}
  \qquad(2\le j\le k).
\end{equation}
\end{lemma}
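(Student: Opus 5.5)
The plan is to turn the collision \eqref{eq:centre-collision} into the relative-translation hypothesis \eqref{eq:relative-translation-approx} for the translated vector $\boldsymbol\gamma=\cT_y\boldsymbol\alpha$. We then apply \cref{lem:triangular-extraction} to $\boldsymbol\gamma$ and translate back to $\boldsymbol\alpha$. We may take $0\le x,y\le N$, since $X$ consists of cut parameters. By \cref{lem:all-cuts}, $X$ lies in an interval of length at most $N/(2k!)$, so $h=x-y$ satisfies $1\le|h|\le N/(2k!)$.

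\emph{Step 1 (reduction to $\boldsymbol\gamma$).} In the forward case \eqref{eq:forward-centres}, the group law \eqref{eq:translation-group-law} gives $\cT_x\boldsymbol\alpha=\cT_h\boldsymbol\gamma$. By \eqref{eq:translation-action}, for $1\le j\le k-1$,
\[
  \beta_j(x)-\beta_j(y)
  \equiv\sum_{\ell=j+1}^{k}\binom{\ell}{j}\gamma_\ell h^{\ell-j}\pmod 1 .
\]
In the backward case \eqref{eq:backward-centres}, $\cR$ only multiplies the $j$th coordinate by $(-1)^j$. The same quantity therefore appears up to sign, and $\|\cdot\|$ is unchanged. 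So \eqref{eq:centre-collision} yields \eqref{eq:relative-translation-approx} for $\gamma_2,\ldots,\gamma_k$ with $\eta=\delta$. \Cref{lem:triangular-extraction} then gives integers $w_\ell$ with
\[
  |k!h\gamma_\ell-w_\ell|\ll_k\delta N^{1-\ell}\qquad(2\le\ell\le k).
\]

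\emph{Step 2 (translating back).} We have $\boldsymbol\alpha=\cT_{-y}\boldsymbol\gamma$, taking real representatives so that $\alpha_\ell=\sum_{m\ge\ell}\binom m\ell\gamma_m(-y)^{m-\ell}$ holds exactly up to integers. Put $v'_\ell=\sum_{m\ge\ell}\binom m\ell w_m(-y)^{m-\ell}$ plus the appropriate integer shift times $k!h$; this is an integer. Then
\[
  |k!h\alpha_\ell-v'_\ell|
  \ll_k\sum_{m\ge\ell}|y|^{m-\ell}\delta N^{1-m}
  \ll_k\delta N^{1-\ell},
\]
using $|y|\le N$. This bookkeeping step, in which the possibly large translation $y$ must not destroy the canonical scales, is the main point to check. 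It works precisely because the action is unipotent and triangular, and $|y|^{m-\ell}N^{1-m}\le N^{1-\ell}$.

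\emph{Step 3 (normalisation).} Set $r'=k!|h|$, changing the signs of the $v'_\ell$ if $h<0$. Then $1\le r'\le N/2$. Let $g=\gcd(r',v'_2,\ldots,v'_k)$, and put $r=r'/g$ and $v_\ell=v'_\ell/g$. Dividing by $g$ only shrinks both the size of $r$ and the errors, so \eqref{eq:preliminary-r-size} holds. The errors are at most $C_k\delta N^{1-\ell}$. Choosing $\delta=\delta(k)$ so small that $C_k\delta\le 1/(4k^4)$ gives \eqref{eq:preliminary-r-approx}.
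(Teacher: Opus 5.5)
Your proposal is correct and is essentially the paper's argument. The only difference is the order of two steps. The paper applies the triangular map $\cT_{-y}$ to the collision first, which gives the hypothesis of \cref{lem:triangular-extraction} for $\boldsymbol\alpha$ itself with $\eta\ll_k\delta$. You instead apply that lemma to $\cT_y\boldsymbol\alpha$ with $\eta=\delta$ and then pull the resulting approximations back through $\cT_{-y}$, using the same unipotent bookkeeping $|y|^{m-\ell}N^{1-m}\ll_k N^{1-\ell}$.

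One small correction: in the backward case a centre can equal $N+1$ (the prefix of length $N$), so you should use $|y|\le N+1$, as the paper does, rather than $0\le x,y\le N$. This only changes the constants.
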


\begin{proof}
In the forward case \eqref{eq:forward-centres}, the collision means that the first $k-1$ coordinates of
$\cT_x\boldsymbol\alpha-\cT_y\boldsymbol\alpha$ are within the scales in \eqref{eq:centre-collision} of integers.  In the backward case, apply the integer diagonal isometry $\cR$ first and reach the same conclusion.

Apply the integer map $\cT_{-y}$.  By \eqref{eq:translation-group-law}, the resulting difference is
$\cT_h\boldsymbol\alpha-\boldsymbol\alpha$.  Since $|y|\le N+1$, the triangular form of $\cT_{-y}$ and \eqref{eq:centre-collision} imply
\begin{equation}\label{eq:relative-after-conjugation}
  \left\|(\cT_h\boldsymbol\alpha-\boldsymbol\alpha)_j\right\|
  \ll_k\delta N^{-j}
  \qquad(1\le j\le k-1).
\end{equation}
Indeed, an error of size $N^{-\ell}$ in the $\ell$th coordinate is multiplied by at most $O_k(N^{\ell-j})$ when it reaches the $j$th coordinate.

Expanding \eqref{eq:relative-after-conjugation} gives the hypotheses of \cref{lem:triangular-extraction} with $\eta\ll_k\delta$.  Hence $k!h\alpha_j$ has an integral approximant with error $O_k(\delta N^{1-j})$.  By \cref{lem:all-cuts},
\[
  |h|\le \frac{N}{2k!}.
\]
Set $r_0=k!|h|\le N/2$, changing the signs of the approximating integers when necessary.  Choose $\delta(k)>0$ sufficiently small that the resulting errors are at most $N^{1-j}/(4k^4)$.  Finally divide $r_0$ and all approximating integers by their common greatest divisor.  The denominator decreases and the errors do not increase, giving \eqref{eq:preliminary-r-size}--\eqref{eq:preliminary-r-approx}.
\end{proof}

\subsection{Proof of the main theorem}

\begin{proof}[Proof of \cref{thm:inverse-main}]
Let
\[
  B=NA^{-1}\ge1.
\]
Choose a small auxiliary parameter $\eta>0$, depending on $k$ and the displayed $\varepsilon$.  By \cref{prop:orbit-collision,lem:collision-to-approximation}, there are integers $r,v_2,\ldots,v_k$ satisfying \eqref{eq:preliminary-r-size}--\eqref{eq:preliminary-r-approx}.

We verify the amplitude hypothesis in \cref{lem:baker-compression}.  Since $r\le N$,
\[
  r^{1-1/k}N^\eta
  \le N^{1-1/k+\eta}.
\]
For $k\ge3$,
\[
  \frac1k-\frac1{k(k-1)}
  =\frac{k-2}{k(k-1)}>0.
\]
Thus, after taking $\eta$ sufficiently small and then $N$ sufficiently large, \eqref{eq:inverse-hypothesis} implies
\[
  A>r^{1-1/k}N^\eta.
\]
Apply \cref{lem:baker-compression}.  With $q=tr$ and $a_j=tv_j$ for $2\le j\le k$, equations \eqref{eq:baker-compressed-q} and \eqref{eq:baker-compressed-high} give
\[
  q\le B^kN^\eta,
  \qquad
  |q\alpha_j-a_j|
  \le B^kN^{-j+\eta}
  \quad(2\le j\le k).
\]
Choose $a_1\in\Z$ nearest to $q\alpha_1$.  By \eqref{eq:baker-compressed-linear},
\[
  |q\alpha_1-a_1|
  \le BN^{-1+\eta}
  \le B^kN^{-1+\eta}.
\]
Since the auxiliary parameter may be chosen smaller than the stated $\varepsilon$, this proves \eqref{eq:inverse-q}--\eqref{eq:inverse-coefficients}.
\end{proof}

\begin{proof}[Proof of \cref{cor:combined-inverse}]
The conclusion under the threshold $A>N^{1-2^{1-k}+\varepsilon}$ is the classical inverse theorem for Weyl sums; see Baker \cite{Baker1982,Baker1992} or Baker--Chen--Shparlinski \cite[Lemma~2.6]{BakerChenShparlinski2024}.  Taking the stronger of that result and \cref{thm:inverse-main} gives \eqref{eq:combined-threshold}.
\end{proof}

\section{Improved Weyl bounds on short intervals}\label{sec:short-interval-bounds}

\subsection{A rational leading coefficient}

\begin{proof}[Proof of \cref{thm:rational-short-intro}]
Translate $\mathcal I$ to $[1,H]$.  This changes only the lower coefficients, while the leading coefficient remains $a/q$.  Let
\[
  S=\sum_{n=1}^{H}e(P(n)),
  \qquad A=|S|.
\]
Fix $\eta>0$ so small that $3d\eta<\varepsilon$.  If
\[
  A\le H^{1-1/\Delta_d+3\eta},
\]
there is nothing to prove after enlarging the final $H^\varepsilon$ loss.  Otherwise apply \cref{cor:combined-inverse} with auxiliary loss $\eta$.  There is an integer $r$ satisfying
\begin{equation}\label{eq:rational-r}
  1\le r\ll(HA^{-1})^dH^\eta
\end{equation}
and
\begin{equation}\label{eq:rational-leading-approx}
  \left\|r\frac aq\right\|
  \ll(HA^{-1})^dH^{-d+\eta}
  =A^{-d}H^\eta.
\end{equation}
The lower bound on $A$ gives
\[
  r\ll H^{d/\Delta_d-(3d-1)\eta}<H\le q
\]
for sufficiently large $H$, since $d/\Delta_d<1$.  As $(a,q)=1$ and $1\le r<q$,
\[
  \left\|r\frac aq\right\|\ge\frac1q.
\]
Together with \eqref{eq:rational-leading-approx}, this gives
$A\ll q^{1/d}H^{\eta/d}$.  Combining the two cases and renaming the small parameter proves \eqref{eq:rational-short-intro}.  Bounded $H$ is absorbed into the implied constant.
\end{proof}

\begin{proof}[Proof of \cref{cor:prime-short-intro}]
Choose integer representatives for the coefficients of $P$.  After division by $p$, the leading coefficient is $a_d/p$ with $(a_d,p)=1$.  Apply \cref{thm:rational-short-intro} with $q=p$.
\end{proof}

\subsection{Comparison with the classical and VMVT bounds}

Taking the better of \eqref{eq:intro-classical-weyl} and
\eqref{eq:intro-standard-weyl}, and using
\eqref{eq:natural-window-intro}, the dominant term is $pH^{-d}=H^{-u}$.
Hence the standard generic estimate is
\begin{equation}\label{eq:standard-u}
  \left|\sum_{n\in\mathcal I}e_p(P(n))\right|
  \ll_{d,\varepsilon}
  H^{1-u/\Delta_d+\varepsilon}.
\end{equation}
On the other hand $p=H^{d-u}$, and therefore
$p^{1/d}=H^{1-u/d}$.  This proves \eqref{eq:prime-u-intro}.  If the first
branch of $\tau_d(u)$ is active, then
\[
  \frac ud>\frac{u}{\Delta_d},
\]
because $\Delta_d>d$ for $d\ge3$.  If the second branch is active, then
\[
  \frac1{\Delta_d}>\frac{u}{\Delta_d},
\]
since $u<1$.  Thus the new estimate is strictly stronger throughout the full
window, including the low degrees in which classical Weyl differencing is
stronger than the VMVT consequence.

Writing
\begin{equation}\label{eq:H-delta}
  H=\left\lfloor p^{1/d+\delta}\right\rfloor,
  \qquad 0<\delta<\frac1{d(d-1)},
\end{equation}
one has, up to a harmless $o(1)$ in exponent calculations,
\begin{equation}\label{eq:u-delta}
  u=\frac{d^2\delta}{1+d\delta}.
\end{equation}
The two branches meet at
\begin{equation}\label{eq:delta-branch-general}
  \delta_{\mathrm{br}}(d)=\frac1{d(\Delta_d-1)}.
\end{equation}
Consequently
\begin{equation}\label{eq:delta-piecewise-general}
  \left|\sum_{n\in\mathcal I}e_p(P(n))\right|
  \ll_{d,\varepsilon}
  \begin{cases}
    p^{1/d}H^\varepsilon,
      &0<\delta\le\dfrac1{d(\Delta_d-1)},\\[3mm]
    H^{1-1/\Delta_d+\varepsilon},
      &\dfrac1{d(\Delta_d-1)}\le\delta<\dfrac1{d(d-1)}.
  \end{cases}
\end{equation}
For $d\ge6$, $\Delta_d=d(d-1)$.  Hence the plateau is
$H^{1-1/[d(d-1)]+\varepsilon}$, the branch point is
\[
  \delta_{\mathrm{br}}(d)
  =\frac1{d(d(d-1)-1)}
  =\frac1{d^3}+\frac1{d^4}+O(d^{-5}),
\]
and the strict improvement persists up to the endpoint
$1/[d(d-1)]$ of the natural window.  Formula \eqref{eq:degree-six-intro} follows by substituting $d=6$ and $\Delta_6=30$.

\section{Distribution and additive applications}\label{sec:distribution-applications}

\subsection{Discrepancy of short polynomial images}

Let $p$ be prime, let $P\in\mathbb F_p[X]$, and let $\mathcal I$ have length $H\le p$.  For an interval $J\subset[0,1)$ define
\[
  N_J(P;\mathcal I)
  =\#\left\{n\in\mathcal I:
  \frac{\widetilde{P(n)}}p\in J\right\},
\]
where $\widetilde{P(n)}\in\{0,1,\ldots,p-1\}$ is the standard representative, and put
\begin{equation}\label{eq:discrepancy-definition}
  D(P;\mathcal I)
  =\sup_{J\subset[0,1)}
  \left|N_J(P;\mathcal I)-H|J|\right|.
\end{equation}

\begin{corollary}\label{cor:discrepancy}
Let $p>d$ be prime and let $P\in\mathbb F_p[X]$ have degree $d\ge3$.  Then
\begin{equation}\label{eq:discrepancy-bound}
  D(P;\mathcal I)
  \ll_{d,\varepsilon}
  \left(p^{1/d}+H^{1-1/\Delta_d}\right)H^\varepsilon.
\end{equation}
\end{corollary}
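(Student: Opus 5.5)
We will deduce the discrepancy bound from \cref{cor:prime-short-intro} via the Erd\H{o}s--Tur\'an inequality. For every positive integer $T$, Erd\H{o}s--Tur\'an gives
\[
  D(P;\mathcal I)
  \ll \frac HT+\sum_{h=1}^{T}\frac1h
  \left|\sum_{n\in\mathcal I}e\!\left(\frac{h\widetilde{P(n)}}p\right)\right|.
\]
Since $e(h\widetilde{P(n)}/p)=e_p(hP(n))$, the inner sum is a complete-residue Weyl sum for the polynomial $hP\in\mathbb F_p[X]$.

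The choice of $T$ needs care. Provided $p\nmid h$, the polynomial $hP$ still has degree exactly $d$. We therefore take $T=\min\{H,p-1\}$, so that $1\le h\le T<p$, and $p\nmid h$ holds automatically.

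For each such $h$, \cref{cor:prime-short-intro} applies to $hP$ with an implied constant depending only on $d$ and $\varepsilon$, uniformly in $h$ and in the lower coefficients. It gives
\[
  \left|\sum_{n\in\mathcal I}e_p(hP(n))\right|
  \ll_{d,\varepsilon} p^{1/d}H^\varepsilon+H^{1-1/\Delta_d+\varepsilon}.
\]
Summing over $h$ with weight $1/h$ costs a factor $\log T\ll \log H\ll H^\varepsilon$, since $T\le H$.

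It remains to treat the term $H/T$. If $T=H$, it equals $1$, which is absorbed. If instead $T=p-1<H$, then $H\le p$ forces $H=p$, and $H/T\ll1$ again. Either way the total is
\[
  D(P;\mathcal I)\ll_{d,\varepsilon}\bigl(p^{1/d}+H^{1-1/\Delta_d}\bigr)H^{2\varepsilon},
\]
and renaming $\varepsilon$ gives \eqref{eq:discrepancy-bound}.

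Bounded $H$, and the case where $p$ is bounded in terms of $d$, are absorbed into the implied constant. We do not expect a genuine obstacle here. The one point to verify is that no frequency $h$ with $p\mid h$ enters the Erd\H{o}s--Tur\'an sum, which is exactly what the truncation $T<p$ ensures.
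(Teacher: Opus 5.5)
Your proof is correct and follows essentially the paper's route: the Erd\H{o}s--Tur\'an inequality, then \cref{cor:prime-short-intro} applied uniformly to $hP$ with $1\le h<p$, so that the degree stays $d$. The only difference is the cutoff. You take $T=\min\{H,p-1\}$, so that $H/T\ll1$, and pay only a $\log H$ loss from the harmonic sum. The paper instead disposes of the trivial case $E\ge H/2$ and balances with $M=\lfloor H/E\rfloor<p$, where $E=p^{1/d}+H^{1-1/\Delta_d}$. Both choices give the same bound.
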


\begin{proof}
Set
\[
  E=p^{1/d}+H^{1-1/\Delta_d}.
\]
The Erd\H{o}s--Tur\'an inequality \cite[Chapter~1]{Montgomery1994} gives, for $1\le M<p$,
\[
  D(P;\mathcal I)
  \ll\frac HM+
  \sum_{h=1}^{M}\frac1h
  \left|\sum_{n\in\mathcal I}e_p(hP(n))\right|.
\]
Since $h<p$, the polynomial $hP$ still has degree $d$.  By \cref{cor:prime-short-intro}, each inner sum is
$O_{d,\varepsilon}(EH^\varepsilon)$.  If $E\ge H/2$ the claimed estimate is trivial; otherwise choose $M=\lfloor H/E\rfloor<p$.  Then
\[
  D(P;\mathcal I)\ll EH^\varepsilon\log(2H),
\]
and the logarithm is absorbed into $H^\varepsilon$.
\end{proof}

The gain is particularly transparent just above the natural threshold.  If
$H=p^{1/d+\delta}$ and
$0<\delta<1/[d(\Delta_d-1)]$, then the first branch is active and
\begin{equation}\label{eq:normalized-discrepancy-first-branch}
  \frac{D(P;\mathcal I)}H
  \ll_{d,\varepsilon}p^{-\delta+\varepsilon}.
\end{equation}
Using the standard generic estimate \eqref{eq:standard-u} in the same
Erd\H{o}s--Tur\'an argument gives instead
$p^{-d\delta/\Delta_d+\varepsilon}$.  Thus the exponent of the saving is
larger by a factor $\Delta_d/d$; for $d\ge6$ this factor is $d-1$.

\subsection{The longest omitted interval of residues}

Identify intervals in $\mathbb F_p$ with cyclic intervals of consecutive residues.  Let $G(P;\mathcal I)$ be the largest length of a cyclic interval containing none of the values $P(n)$ with $n\in\mathcal I$.

\begin{corollary}\label{cor:gaps}
Under the hypotheses of \cref{cor:discrepancy},
\begin{equation}\label{eq:gap-bound}
  G(P;\mathcal I)
  \ll_{d,\varepsilon}
  \frac pH
  \left(p^{1/d}+H^{1-1/\Delta_d}\right)H^\varepsilon.
\end{equation}
\end{corollary}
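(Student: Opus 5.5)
The gap bound follows directly from \cref{cor:discrepancy}, by comparing the count of values landing in an empty interval with the discrepancy estimate.

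Let $J_0\subset\mathbb F_p$ be a cyclic interval of $G=G(P;\mathcal I)$ consecutive residues that contains no value $P(n)$ with $n\in\mathcal I$. A cyclic interval of residues either does not wrap around $0$, or splits into two ordinary intervals, one ending at $p-1$ and one starting at $0$. In either case it contains an ordinary (non-wrapping) interval of at least $G/2$ consecutive residues, say $\{c,c+1,\ldots,c+G'-1\}\subseteq\{0,\ldots,p-1\}$ with $G'\ge G/2$. Set
\[
  J=\bigl[c/p,\,(c+G')/p\bigr)\subset[0,1).
\]
Then $\widetilde{P(n)}/p\in J$ exactly when $\widetilde{P(n)}\in\{c,\ldots,c+G'-1\}$. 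So $N_J(P;\mathcal I)=0$, while $|J|=G'/p$.

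By the definition \eqref{eq:discrepancy-definition},
\[
  H\frac{G'}p=\bigl|N_J(P;\mathcal I)-H|J|\bigr|\le D(P;\mathcal I).
\]
Combining this with \eqref{eq:discrepancy-bound} gives
\[
  G\le 2G'\le\frac{2p}{H}D(P;\mathcal I)\ll_{d,\varepsilon}\frac pH\left(p^{1/d}+H^{1-1/\Delta_d}\right)H^\varepsilon,
\]
which is \eqref{eq:gap-bound}.

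There is no real obstacle here. The only point needing care is the wrap-around of cyclic intervals, and halving the interval handles it at the cost of a constant factor. The bound is trivially true whenever its right side is at least $p$, so no restriction on $H$ is required.
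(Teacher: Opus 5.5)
Your proposal is correct and follows the paper's own argument: split the omitted cyclic interval at zero, keep an ordinary piece $J$ of length $\gg G$ on which $N_J(P;\mathcal I)=0$, deduce $H|J|\le D(P;\mathcal I)$, and insert the bound of \cref{cor:discrepancy}. You simply make the constant explicit, namely the factor $2$ lost by halving.
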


\begin{proof}
If a cyclic interval of $Y$ residues contains no value $P(n)$, split it at zero if necessary.  For one of the resulting ordinary intervals, the expected number of hits is comparable to $HY/p$, while the actual number is zero.  Hence
$HY/p\ll D(P;\mathcal I)$.  Applying \eqref{eq:discrepancy-bound} and solving for $Y$ proves the result.
\end{proof}

In the first branch $H=p^{1/d+\delta}$, this becomes
\begin{equation}\label{eq:gap-first-branch}
  G(P;\mathcal I)\ll_{d,\varepsilon}p^{1-\delta+\varepsilon}.
\end{equation}
The corresponding consequence of the standard generic Weyl bound is
$p^{1-d\delta/\Delta_d+\varepsilon}$; for $d\ge6$ this is
$p^{1-\delta/(d-1)+\varepsilon}$.

\subsection{An additive-basis consequence}

For $s\ge1$ and $a\in\mathbb F_p$, let
\begin{equation}\label{eq:representation-function}
  R_s(a)=\#\left\{(n_1,\ldots,n_s)\in\mathcal I^s:
  P(n_1)+\cdots+P(n_s)=a\right\}.
\end{equation}
Thus $R_s(a)>0$ for every $a$ means that the short polynomial image, counted with its natural parametrisation, is an additive basis of order $s$ for $\mathbb F_p$.

Put
\begin{equation}\label{eq:E-def-additive}
  E_d(p,H)=p^{1/d}+H^{1-1/\Delta_d}.
\end{equation}

\begin{corollary}\label{cor:additive-basis}
Fix $d\ge3$, $s\ge1$, and $\eta>0$.  Under the hypotheses of \cref{cor:prime-short-intro}, suppose that
\begin{equation}\label{eq:additive-basis-condition}
  \left(\frac{H}{E_d(p,H)}\right)^s
  \ge pH^\eta.
\end{equation}
Then, for sufficiently large $H$ in terms of the fixed parameters,
\[
  R_s(a)>0\qquad(a\in\mathbb F_p).
\]
\end{corollary}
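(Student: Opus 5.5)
Write $T(h)=\sum_{n\in\mathcal I}e_p(hP(n))$ for $h\in\mathbb F_p$, so that $T(0)=H$. The plan is the standard orthogonality argument: express $R_s(a)$ as a character sum, isolate the main term from $h=0$, and bound the remaining terms by combining the pointwise estimate of \cref{cor:prime-short-intro} with Parseval. Orthogonality of additive characters gives
\[
  R_s(a)=\frac1p\sum_{h\in\mathbb F_p}T(h)^s e_p(-ha)
  =\frac{H^s}{p}+\frac1p\sum_{h\ne0}T(h)^s e_p(-ha).
\]
It therefore suffices to show that the error term is $o(H^s/p)$ uniformly in $a$.

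For $h\ne0$, the polynomial $hP$ still has degree $d$, because $p>d$ and multiplication by $h$ preserves the leading coefficient up to a unit. By \cref{cor:prime-short-intro} applied with a small auxiliary $\varepsilon$,
\[
  \max_{h\ne0}|T(h)|\ll_{d,\varepsilon}E_d(p,H)H^{\varepsilon}.
\]
For the sum over $h$, Parseval gives $\sum_{h}|T(h)|^2=pH$, since $\mathcal I$ consists of at most $p$ distinct residues when $H\le p$. For $s\ge2$ we bound $s-2$ factors pointwise and the last two by Parseval:
\[
  \frac1p\sum_{h\ne0}|T(h)|^s\le \bigl(\max_{h\ne0}|T(h)|\bigr)^{s-2}H\ll (EH^\varepsilon)^{s-2}H.
\]
The required inequality $(EH^{\varepsilon})^{s-2}H=o(H^s/p)$ is equivalent to $pE^{s-2}H^{(s-2)\varepsilon}=o(H^{s-1})$. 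The hypothesis \eqref{eq:additive-basis-condition} states $pE^s\le H^{s-\eta}$, which gives
\[
  pE^{s-2}\le H^{s-\eta}E^{-2}.
\]
Since $E\ge H^{1-1/\Delta_d}$, it might seem that $E^{-2}\le H^{-1}$ is needed. That holds when $E^2\ge H$, and $2(1-1/\Delta_d)\ge1$ is automatic because $\Delta_d\ge4$. So $pE^{s-2}\le H^{s-1-\eta}$, and choosing $(s-2)\varepsilon<\eta/2$ finishes this case.

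The main obstacle is the small-$s$ cases and the bookkeeping of the $H^{\varepsilon}$ losses. The simplest route is to avoid Parseval altogether and bound the error trivially by $\max_{h\ne0}|T(h)|^s\le (EH^\varepsilon)^s$. We then need $(EH^{\varepsilon})^s<H^s/p$, which is exactly \eqref{eq:additive-basis-condition} once $s\varepsilon<\eta$, with $H$ large enough to absorb the implied constants. This covers every $s\ge1$ uniformly, and it is the version I would actually write, keeping the Parseval variant only as a remark. For $s=1$ the condition forces $H/E\ge pH^\eta>H$, which cannot hold, so that case is vacuous. Either way $R_s(a)\ge H^s/p-(EH^{\varepsilon})^s>0$ for all sufficiently large $H$.
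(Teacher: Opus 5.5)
Your final route (Fourier inversion, isolating $H^s/p$ from $h=0$, and bounding the rest trivially by $\max_{h\ne0}|T(h)|^s\ll (E_d(p,H)H^{\varepsilon})^s$ with $s\varepsilon<\eta$) is exactly the paper's proof, and it is correct. One slip in your Parseval remark only: $\sum_{h}|T(h)|^2=p\,\#\{(n,m)\in\mathcal I^2:P(n)=P(m)\}$, which is at most $dpH$ rather than $pH$, because each value of $P$ has at most $d$ preimages (distinctness of the residues $n$ alone does not suffice); this changes only a constant.
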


\begin{proof}
Fourier inversion gives
\begin{align}\label{eq:Rs-fourier}
  R_s(a)
  &=\frac1p\sum_{t\in\mathbb F_p}
  e_p(-ta)
  \left(\sum_{n\in\mathcal I}e_p(tP(n))\right)^s\\
  &=\frac{H^s}{p}
  +O\left(
  \max_{t\in\mathbb F_p^\times}
  \left|\sum_{n\in\mathcal I}e_p(tP(n))\right|^s
  \right).\nonumber
\end{align}
For $t\ne0$, the polynomial $tP$ still has degree $d$.  Choose the $\varepsilon$ in \cref{cor:prime-short-intro} so small that $s\varepsilon<\eta/2$.  Then the error in \eqref{eq:Rs-fourier} is
\[
  \ll_{d,s,\eta}E_d(p,H)^sH^{\eta/2}.
\]
Condition \eqref{eq:additive-basis-condition} makes this $o(H^s/p)$, after a harmless strengthening by a fixed constant.  Therefore $R_s(a)>0$ uniformly in $a$.
\end{proof}

Suppose more explicitly that
\[
  H=p^{1/d+\delta},
  \qquad
  0<\delta<\frac1{d(\Delta_d-1)}.
\]
Then $E_d(p,H)\asymp p^{1/d}$, so \eqref{eq:additive-basis-condition} is satisfied for every fixed integer
\begin{equation}\label{eq:s-first-branch}
  s>\frac1\delta.
\end{equation}
By comparison, the same pointwise Fourier argument based on the best of
the classical and VMVT Weyl estimates requires
\begin{equation}\label{eq:s-standard-comparison}
  s>\frac{\Delta_d}{d\delta}.
\end{equation}
Thus, in the genuinely shortest part of the interval range, the new Weyl
estimate reduces by a factor $\Delta_d/d$ the number of variables supplied by
this deterministic additive-basis method; for $d\ge6$ the factor is $d-1$.
We do not claim that \eqref{eq:s-first-branch} is optimal among methods using
higher moments or additional algebraic information.

\section{Diophantine applications}\label{sec:diophantine-applications}

\subsection{Small fractional parts of general polynomials}

We use the standard large-multiple lemma from Baker's treatment of small fractional parts; see \cite[Theorem~2.2]{Baker1986} and the proof of \cite[Theorem~1]{Baker2016}.

\begin{lemma}[Baker]\label{lem:large-multiple}
Let $P(X)=\alpha_kX^k+\cdots+\alpha_1X$ and let $M\ge2$.  If
\[
  \|P(n)\|>M^{-1}
  \qquad(1\le n\le N),
\]
then
\begin{equation}\label{eq:large-multiple}
  \sum_{m=1}^{M}|g_k(m\boldsymbol\alpha;N)|\gg N.
\end{equation}
\end{lemma}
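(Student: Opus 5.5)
The natural route is the classical Erd\H{o}s--Tur\'an/Dirichlet-box argument: bound the Fourier coefficients of the counting function of $\{P(n)\}$ near $0$ from below by the measure of the target interval. Suppose $\|P(n)\|>M^{-1}$ for all $1\le n\le N$. Choose a nonnegative trigonometric polynomial $T$ of degree at most $M$ that is $\ll M^{-1}$ on average and concentrated near $0$. The Fej\'er kernel normalised so that
\[
  T(x)=\sum_{|m|\le M}\Bigl(1-\tfrac{|m|}{M+1}\Bigr)\e(mx),
  \qquad T(x)\ge0,
\]
is one option. It has $T(0)=M+1$ and $T(x)\ll M^{-1}\|x\|^{-2}$.

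The preferred choice is a Selberg/Vaaler-type minorant. We want $T$ to minorise a multiple of the indicator of $\{\|x\|\le M^{-1}\}$ while keeping a positive constant term. Concretely, take $T$ of degree $\le M$ with
\[
  T(x)\le 0 \quad\text{for }\|x\|>M^{-1},
  \qquad \widehat T(0)\ge c>0,
  \qquad |\widehat T(m)|\ll 1 .
\]
Such a $T$ is obtained by adjusting constants in Vaaler's extremal minorant of an interval of length $\asymp M^{-1}$ at degree $\asymp M$; equivalently one may rescale $M$ by a constant, which is harmless. Summing over $n$ then gives
\[
  0\ge\sum_{n=1}^N T(P(n))=\widehat T(0)N+\sum_{0<|m|\le M}\widehat T(m)\,g_k(m\boldsymbol\alpha;N),
\]
with $\sum_n \e(mP(n))=g_k(m\boldsymbol\alpha;N)$ because $P$ has no constant term. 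Negative $m$ give complex conjugates of the positive ones.

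Rearranging yields
\[
  cN\le\widehat T(0)N\le \Bigl|\sum_{0<|m|\le M}\widehat T(m)g_k(m\boldsymbol\alpha;N)\Bigr|\ll\sum_{m=1}^M|g_k(m\boldsymbol\alpha;N)|,
\]
which is the claimed bound. The main technical point is the existence of the minorant with the sign and constant-term properties at degree $O(M)$. I would handle it by taking the Beurling--Selberg minorant of the indicator of an arc of length $2M^{-1}$ and degree $\lfloor M/C\rfloor$; its mean is $2M^{-1}-(\lfloor M/C\rfloor+1)^{-1}$, which is $\gg M^{-1}$ only if we widen the arc.

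This is why the argument needs a rescaling of $M$. I would prove the statement first with the hypothesis $\|P(n)\|>C/M$ and $m\le M$, then replace $M$ by $M/C$. That replacement weakens the hypothesis, and the monotonicity of the sum in $M$ finishes the proof. Multiplying by $M$ normalises $\widehat T(0)\gg1$ and $|\widehat T(m)|\ll1$.
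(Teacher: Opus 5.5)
The paper gives no proof of this lemma and simply cites Baker. Your overall strategy is the right one: a real trigonometric polynomial of degree at most $M$, nonpositive off the arc $\{\|x\|\le M^{-1}\}$, with mean $\gg M^{-1}$ and coefficients $\ll M^{-1}$. The gap is in the step you yourself single out as the main technical point.

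\textbf{The rescaling step fails.} Your auxiliary statement says that $\|P(n)\|>C/M$ for $n\le N$ implies $\sum_{m\le M}|g_k(m\boldsymbol\alpha;N)|\gg N$. It can be brought to bear on the hypothesis $\|P(n)\|>1/M$ only by applying it with $CM$ in place of $M$. It then bounds $\sum_{m\le CM}$ from below. That sum dominates $\sum_{m\le M}$, so monotonicity gives nothing. Applying it with $M/C$ in place of $M$, as you write, turns the hypothesis into $\|P(n)\|>C^2/M$, which is stronger, not weaker.

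The product of the half-width of the excluded arc and the frequency cutoff is unchanged by any substitution $M\mapsto\lambda M$. The lemma asserts the result exactly when this product is $1$. So ``rescale $M$ by a constant, which is harmless'' is precisely what cannot be done, and the existence of a suitable minorant at that exact product is the whole content of the lemma. The Fej\'er kernel is not an option either: it is nonnegative, and the mean-one Fej\'er kernel of degree $L$ minus a constant $c<1$ is nonpositive on $\|x\|>1/M$ only when $L\gg M^2$.

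\textbf{The fix.} Drop the rescaling. Your Selberg computation failed only because you took degree $\lfloor M/C\rfloor$ instead of the full degree $\lfloor M\rfloor$ that the sum over $m\le M$ allows. Let $I=[-M^{-1},M^{-1}]$ and let $S^-$ be Selberg's minorant of $\chi_I$ of degree $\lfloor M\rfloor$. Then $S^-(P(n))\le 0$ for every $n$, and the standard properties of the construction give
\[
  \widehat{S^-}(0)=\frac2M-\frac1{\lfloor M\rfloor+1}>\frac1M,
  \qquad
  |\widehat{S^-}(m)|\le|\widehat{\chi_I}(m)|+\frac1{\lfloor M\rfloor+1}<\frac3M
  \quad(0<|m|\le\lfloor M\rfloor).
\]
Your summation then yields
\[
  \frac NM<\frac6M\sum_{1\le m\le M}|g_k(m\boldsymbol\alpha;N)|,
\]
which is the lemma with the explicit constant $1/6$. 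The symmetric arc has length $2/M$, against a loss of $1/(\lfloor M\rfloor+1)<1/M$ in the mean. That factor $2$ is exactly what makes product $1$ work, so no room needs to be created by rescaling.
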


\begin{proof}[Proof of \cref{thm:small-fractional-parts-intro}]
Put $\Delta=\Delta_k$.  Suppose, to the contrary, that
\begin{equation}\label{eq:no-small-fractional-part}
  \|P(n)\|>N^{-1/\Delta+\varepsilon}
  \qquad(1\le n\le N).
\end{equation}
Let
\[
  M=\left\lfloor N^{1/\Delta-\varepsilon/2}\right\rfloor.
\]
By \cref{lem:large-multiple}, there is an integer $1\le m\le M$ such that
\begin{equation}\label{eq:large-multiple-one}
  |g_k(m\boldsymbol\alpha;N)|\gg N/M.
\end{equation}
After a harmless adjustment of the small parameters, this lies above the threshold in \cref{cor:combined-inverse}.  Hence there are integers $q,a_1,\ldots,a_k$ satisfying
\begin{equation}\label{eq:small-frac-q}
  q\ll M^kN^\eta,
  \qquad
  \|qm\alpha_j\|\ll M^kN^{-j+\eta}
  \quad(1\le j\le k)
\end{equation}
for arbitrarily small fixed $\eta>0$.

Set $n=qm$.  Since $\Delta\ge k+1$ for $k\ge3$,
\[
  n\ll M^{k+1}N^\eta\le N.
\]
Moreover,
\begin{align*}
  \|\alpha_jn^j\|
  &\le n^{j-1}\|qm\alpha_j\|\\
  &\ll M^{(k+1)(j-1)+k}N^{-j+j\eta}
  \ll N^{-1/\Delta-\varepsilon/4}
\end{align*}
when $\eta$ is sufficiently small.  Summing over $j$ contradicts \eqref{eq:no-small-fractional-part} for large $N$.
\end{proof}

For $k\ge6$, this gives the exponent $1/[k(k-1)]$ for arbitrary intermediate coefficients.  Baker obtained the same exponent for the binomial phase $\alpha_kn^k+\alpha_1n$, whereas his 2016 theorem for a general polynomial used $1/[2k(k-1)]$ for $k\ge8$ \cite[Theorems~1 and~2(a)]{Baker2016}.

\subsection{Polynomial fractional parts at prime arguments}

We now explain why Baker's Harman-sieve argument \cite{BakerPrimes2017} accepts the improved inverse threshold without further analytic changes.

\begin{proof}[Proof of \cref{thm:prime-fractional-parts-intro}]
Baker associates to a degree-$k$ polynomial a denominator parameter $J(f)$.  For a general polynomial his choice is
\[
  J(f)=2^{k-1}\quad(k\le7),
  \qquad
  J(f)=2k(k-1)\quad(k\ge8),
\]
while for the binomial $\alpha X^k+\beta$ he uses $J(f)=k(k-1)$ from degree six onward.  His theorem states that, for $k\ge4$, every
\[
  \nu<\frac{0.4079}{J(f)}
\]
is admissible.

The parameter $J(f)$ first enters through \cite[Lemma~5]{BakerPrimes2017}.  In the nonclassical case that lemma is applied with one selected large Weyl sum ($M=1$), and its simultaneous approximation conclusion is precisely the conclusion of Baker's 2016 inverse theorem.  Translating the interval and multiplying the phase by the selected integer do not affect the uniformity of \cref{cor:combined-inverse}.  Thus \cite[Lemma~5]{BakerPrimes2017} remains valid for a general polynomial with
\begin{equation}\label{eq:new-J-primes}
  J=\Delta_k.
\end{equation}

For completeness, we record the parameter check in the part of Baker's proof where $J$ is subsequently used.  Put
\[
  \rho=\frac{0.4079}{J}.
\]
The Type~I and Type~II ranges involve the product $J\rho=0.4079$, which is unchanged by \eqref{eq:new-J-primes}, together with $J\ge k+1$.  In the new cases $k\ge6$, one has $J=k(k-1)$, and the remaining numerical requirements used in the Type~I and Type~II estimates include
\[
  \rho\left(\frac52-\frac{3}{2k}\right)<\frac1{2k},
  \qquad
  \rho<\frac{3k}{20k+5};
\]
both are immediate for $\rho=0.4079/[k(k-1)]$.  The complete-sum estimates, the Type~I/II decompositions, and the final Harman-sieve calculation are otherwise unchanged.  Baker's proof therefore yields \eqref{eq:prime-fractional-parts} for every $\nu$ in \eqref{eq:prime-nu-range}.
\end{proof}

The numerical improvement begins in degree six:
\[
\begin{array}{c|ccc|c}
  k&6&7&8&k\ge8\\ \hline
  \text{Baker's general denominator}&32&64&112&2k(k-1)\\
  \text{new denominator}&30&42&56&k(k-1).
\end{array}
\]
Thus the prime-argument exponent is asymptotically doubled for general polynomials.  This conclusion concerns arbitrary intermediate coefficients; specialized monomial estimates may be stronger in other regimes.

\section{Further consequences and remarks}\label{sec:further-consequences}

\subsection{Maximal operators and denominators}

Baker--Chen--Shparlinski \cite{BakerChenShparlinski2024} use their large-value inverse theorem as the entrance point to a refined prime-power factorisation of the common denominator and then to estimates for maximal Weyl operators.  In every part of their argument whose lower amplitude range is limited only by \cite[Lemma~2.6]{BakerChenShparlinski2024}, one may replace
\[
  D_k=\min\{2^{k-1},2k(k-1)\}
  \quad\text{by}\quad
  \Delta_k=\min\{2^{k-1},k(k-1)\}.
\]
In particular, the factorisation and large-value counting statements in their Lemmas~2.7 and~2.9 extend to the enlarged range
$A>N^{1-1/\Delta_k+\varepsilon}$.  We do not reproduce the resulting maximal-operator formulas here, since their proofs and conclusions are unchanged apart from this threshold substitution.

\subsection{Exceptional sets and local mean values}

The same replacement propagates to two related developments.  The covering argument in Baker--Chen--Shparlinski's work on large Weyl sums and Hausdorff dimension \cite{BakerChenShparlinskiHausdorff2022} uses the large-value structure only above its inverse-theorem threshold; hence its corresponding upper bounds extend from the range governed by $D_k$ to that governed by $\Delta_k$.  Likewise, the local mean value arguments of Brandes--Chen--Shparlinski \cite{BrandesChenShparlinski2024} invoke the refined denominator description before performing their box counting.  Replacing that input by the enlarged version above improves the associated admissible moment ranges.  These are direct transfers rather than new arguments, and we leave their numerous parameterized formulations in the notation of the cited papers.

\subsection{The critical threshold and a possible next step}

The exponent $k(k-1)$ is the critical moment exponent for the degree-$(k-1)$ Vinogradov system.  The all-cuts argument supplies $\asymp N$ sampling points, while the lower-coefficient frequency rectangle has volume $N^{k(k-1)/2}$.  At the critical moment, the sampling inequality and the fixed-leading-coefficient maximal estimate give
\[
  NA^{k(k-1)}\ll N^{k(k-1)+\varepsilon},
\]
which forces a collision at exactly the threshold
$A>N^{1-1/[k(k-1)]+\varepsilon}$.

The orbit centres are, however, not arbitrary points of $\mathbb T^{k-1}$: they lie on the polynomial translation orbit
$m\mapsto\pi(\mathcal T_m\boldsymbol\alpha)$.  Improving the threshold further would require exploiting this structure before the first collision.  Expanding an orbit average and stratifying by the first nonvanishing power-sum difference produces a lower-degree Weyl phase in the orbit parameter.  This suggests an orbit-restricted broad--narrow or decoupling refinement, with the narrow case feeding back additional rational structure.  We do not pursue that iteration here.

\subsection{Uniformity in the degree}

All estimates proved in this paper, unless explicitly stated otherwise, have the following order of quantifiers: the degree and the displayed small parameter are fixed, and then the estimates hold uniformly as the lengths, coefficients, moduli, and intervals vary.  No uniformity is asserted for a degree tending to infinity with the length or modulus.

\section*{Statement on the use of AI}

ChatGPT 5.6 Pro was used during exploratory work and in preparing an initial draft, including algebraic calculations, literature searches, exposition, and \LaTeX{} preparation.  The author is responsible for checking every argument and for the mathematical content of any submitted version.

\end{document}